\documentclass[11pt,reqno]{amsart}
\usepackage[left=33mm,right=33mm,top=30mm,bottom=32mm]{geometry}
\usepackage{mathtools,amssymb,amsthm,mathrsfs,color,float}
\usepackage{paralist}
\usepackage{stackengine}
\usepackage{centernot}
\usepackage{txfonts}
\usepackage{tabularx}
\usepackage{longtable}
\usepackage[colorlinks,
linkcolor=red,
anchorcolor=green,
citecolor=blue, 
]{hyperref}

\usepackage[T1]{fontenc}
\usepackage[utf8]{inputenc}
\usepackage{booktabs}

\usepackage{calc}
\definecolor{bleu1}{RGB}{0,57,128}
\def\bleu1{\color{bleu1}}

\usepackage{etoolbox}
\patchcmd{\section}{\normalfont}{\normalfont \bleu1}{}{}
\patchcmd{\subsection}{\normalfont}{\normalfont \bleu1}{}{}
\patchcmd{\subsubsection}{\normalfont}{\normalfont \bleu1}{}{}

\newtheorem{The}{\bleu1 Theorem}[section]
\newtheorem{Pro}{\bleu1 Proposition}[section]
\newtheorem{Lem}{\bleu1 Lemma}[section]
\newtheorem{Cor}{\bleu1 Corollary}[section]

\theoremstyle{definition}
\newtheorem{defn}{\bleu1 Definition}[section]

\newcounter{Mr}
\newtheorem{Result}[Mr]{\textbf{Main Result}}

\newcommand{\R}{\mathbb{R}}

\makeatletter
\@namedef{subjclassname@2020}{\textup{2020} Mathematics Subject Classification}
\makeatother

\title[Regularity of the fundamental solutions]{Generalized Hamiltonian gradient flow of contact type I: regularity of the fundamental solutions}
\author{Wei Cheng, Shengqing Hu \and Kaizhi Wang}
\address[Wei Cheng]{School of Mathematics, Nanjing University, Nanjing 210093, China}
\email{chengwei@nju.edu.cn}
\address[Shengqing Hu]{Faculty of Computational Mathematics and Cybernetics, Shenzhen MSU-BIT University, 518172, Shenzhen, China}
\email{hushengqing@smbu.edu.cn}
\address{School of Mathematical Sciences, Shanghai Jiao Tong University, Shanghai 200240, China}
\email{kzwang@sjtu.edu.cn}
\keywords{Hamilton-Jacobi equation, fundamental solution, Herglotz' variational principle, semiconcavity}
\subjclass[2020]{37Jxx,35F21,49L25}

\begin{document}
\maketitle


\begin{abstract}
This paper studies generalized Hamiltonian gradient flows for contact-type Hamilton--Jacobi equations, adopting the variational framework of Herglotz’s principle and its fundamental solution \(h_L(t,x,y,u)\). Two main results are presented. First, precise first-order sensitivity relations are derived, linking derivatives of \(h_L\) to dual arcs of minimizing trajectories. Second, quantitative second-order estimates show that \(h_L\) is locally semiconcave and, over short time, semiconvex—indeed uniformly convex in certain variables. These regularity properties follow from a detailed variational analysis of minimizing trajectories. The work establishes a foundation for intrinsic methods in analyzing singularity propagation and generalized gradient flows in contact context, with implications for weak KAM theory, optimal transport, and regularity of viscosity solutions.
\end{abstract}

\section{Introduction}

This is the first in a series of papers studying the theory of \emph{generalized Hamiltonian gradient flows} developed in \cite{CCHW2024} in the context of contact type Hamilton--Jacobi equations. Let $M$ be a smooth, connected, compact manifold without boundary, and let $TM$ and $T^{*}M$ denote its tangent and cotangent bundles, respectively. Let $H:T^{*}M\to\mathbb{R}$ be a Tonelli Hamiltonian and $L:TM\to\mathbb{R}$ the associated Tonelli Lagrangian.

Given an interval $I$ (which may be the entire real line), we say that a locally absolutely continuous curve $\gamma:I\to M$ is a \emph{maximal slope curve for the pair $(\phi,H)$}, where $\phi$ is a semiconcave function on $M$ and $H$ a Tonelli Hamiltonian on $M$, if $\gamma$ satisfies the following energy-dissipation inequality (EDI) type variational inequality:
\begin{equation}\label{eq:msc_H_phi}\tag{EDI}
    \phi(\gamma(t_{2}))-\phi(\gamma(t_{1}))\leqslant\int_{t_{1}}^{t_{2}}\Bigl\{L(\gamma(s),\dot{\gamma}(s))+H\bigl(\gamma(s),\mathbf{p}^{\#}_{\phi,H}(\gamma(s))\bigr)\Bigr\}\,ds,
    \qquad \forall t_{1},t_{2}\in I,\;t_{1}\leqslant t_{2},
\end{equation}
with equality holding for such a curve $\gamma$. Here the \emph{minimal energy selection} is defined as a Borel measurable selection from the superdifferential $D^{+}\phi(x)$:
\begin{equation*}
    \mathbf{p}^{\#}_{\phi,H}(x)=\arg\min\bigl\{H(x,p):p\in D^{+}\phi(x)\bigr\},\qquad x\in M.
\end{equation*}

By the equality condition in Fenchel-Young's inequality, $\gamma$ is a maximal slope curve for the pair $(\phi,H)$ if and only if it satisfies the following equation:
\begin{equation}\label{eq:SGC1}\tag{SC}
    \dot{\gamma}(t)=H_{p}\bigl(\gamma(t),\mathbf{p}^{\#}_{\phi,H}(\gamma(t))\bigr),\qquad \text{a.e. } t\in I.
\end{equation}
We refer to \eqref{eq:SGC1} as the \emph{generalized Hamiltonian gradient flow} associated with $(\phi,H)$, and call each solution a \emph{strict singular characteristic}. From \eqref{eq:msc_H_phi} we observe that when $\gamma$ is a backward calibrated curve, the energy dissipation term $\mathbf{p}^{\#}_{\phi,H}(\gamma(s))$ is constant and equals Ma\~n\'e's critical value $c[H]$. In the forward direction, however, this term captures a certain irreversibility phenomenon intrinsic to Hamiltonian systems. Building on an intrinsic method developed in a series of papers \cite{Cannarsa_Cheng3,Cannarsa_Cheng_Fathi2017,Cannarsa_Cheng_Fathi2021,Cannarsa_Cheng_Hong2025}, the authors proved in \cite{CCHW2024} that every solution of \eqref{eq:SGC1} propagates cut points globally whenever the initial point is a cut point.

Contact type Hamilton--Jacobi equations have attracted considerable attention in recent decades. They constitute an important class of dissipative Hamiltonian systems with numerous applications. In the present work, we focus on the Euclidean setting.

Let $L=L(x,v,r):\mathbb{R}^{n}\times\mathbb{R}^{n}\times\mathbb{R}\to\mathbb{R}$ be a $C^{2}$ function satisfying the following standing assumptions:
\begin{enumerate}[(L1)]
    \item $L(x,\cdot,r)$ is strictly convex for every $(x,r)\in\mathbb{R}^{n}\times\mathbb{R}$.
    \item There exist two superlinear functions $\overline{\theta}_{0},\theta_{0}:[0,+\infty)\to[0,+\infty)$ and constants $c_{0},c_{1}\geqslant0$ such that
    \begin{align*}
        \overline{\theta}_{0}(|v|)+c_{1}\geqslant L(x,v,0)\geqslant\theta_{0}(|v|)-c_{0},\quad 
        \forall (x,v)\in\mathbb{R}^{n}\times\mathbb{R}^{n}.
    \end{align*}
    \item There exists a constant $K>0$ such that
    \begin{align*}
        |L_{r}(x,v,r)|\leqslant K,\quad 
        \forall (x,v,r)\in\mathbb{R}^{n}\times\mathbb{R}^{n}\times\mathbb{R}.
    \end{align*}
\end{enumerate}

The associated Hamiltonian $H:\mathbb{R}^{n}\times\mathbb{R}^{n}\times\mathbb{R}\to\mathbb{R}$ is defined by
\begin{align*}
    H(x,p,r)=\sup_{v\in\mathbb{R}^{n}}\bigl\{\langle p,v\rangle-L(x,v,r)\bigr\},\quad 
    \forall (x,p,r)\in\mathbb{R}^{n}\times\mathbb{R}^{n}\times\mathbb{R}.
\end{align*}
Then $H$ is also of class $C^{2}$ and satisfies the following conditions:
\begin{enumerate}[(H1)]
    \item $H(x,\cdot,r)$ is strictly convex for every $(x,r)\in\mathbb{R}^{n}\times\mathbb{R}$.
    \item There exist two superlinear functions $\overline{\Theta}_{0},\Theta_{0}:[0,+\infty)\to[0,+\infty)$ and constants $c^{*}_{0},c^{*}_{1}\geqslant0$ such that
    \begin{align*}
        \overline{\Theta}_{0}(|v|)+c^{*}_{1}\geqslant H(x,p,0)\geqslant\Theta_{0}(|v|)-c^{*}_{0},\quad 
        \forall (x,v)\in\mathbb{R}^{n}\times\mathbb{R}^{n}.
    \end{align*}
    \item There exists a constant $K^{*}>0$ such that
    \begin{align*}
        |H_{r}(x,p,r)|\leqslant K^{*},\quad 
        \forall (x,p,r)\in\mathbb{R}^{n}\times\mathbb{R}^{n}\times\mathbb{R}.
    \end{align*}
\end{enumerate}

Let us recall the \emph{Herglotz generalized variational problem}. Fix $x,y\in\mathbb{R}^{n}$, $t>0$, and $u\in\mathbb{R}$. Define
\begin{align*}
    \Gamma^{t}_{x,y}=\bigl\{\xi\in C^{2}([0,t],\mathbb{R}^{n}): \xi(0)=x,\ \xi(t)=y\bigr\}.
\end{align*}
For a given $\xi\in\Gamma^{t}_{x,y}$, let $u_{\xi}$ be the unique $C^{2}$ solution of the ODE
\begin{equation}\label{eq:Caratheodory_ODE}
    \begin{cases}
        \dot{u}_{\xi}(s)=L\bigl(\xi(s),\dot{\xi}(s),u_{\xi}(s)\bigr), & s\in[0,t],\\[2mm]
        u_{\xi}(0)=u.
    \end{cases}
\end{equation}
We then define
\begin{equation}\label{eq:fundamental_solution}
    \begin{split}
        h_{L}(t,x,y,u) &:=\inf_{\xi\in\Gamma^{t}_{x,y}}\int^{t}_{0}L\bigl(\xi(s),\dot{\xi}(s),u_{\xi}(s)\bigr)\,ds\\
        &\;=\inf_{\xi\in\Gamma^{t}_{x,y}}\bigl\{u_{\xi}(t)-u\bigr\}.
    \end{split}
\end{equation}
Notice that if the infimum in \eqref{eq:fundamental_solution} is taken over all absolutely continuous curves connecting $x$ to $y$, it is still attained, and any minimizer $\xi$ is in fact of class $C^{2}$. For convenience, we therefore assume $\xi\in C^{2}([0,t],\mathbb{R}^{n})$ in \eqref{eq:fundamental_solution}. 

It should be noted that an alternative, implicit variational principle developed earlier by Lin Wang, Jun Yan and the third author \cite{Wang_Wang_Yan2017,Wang_Wang_Yan2019_1} has also been successfully applied to contact type Hamilton--Jacobi equations and to contact type weak KAM theory. The dynamics of contact Hamiltonian systems and their corresponding mappings, as well as issues such as the well-posedness and long-term behavior of contact-type Hamilton-Jacobi equations, are receiving increasing attention. See, for instance, \cite{Arnaud_Florio_Roos2024,Zavidovique2022,Maro_Sorrentino2017,de_Leon_Izquierdo-Lopez2024,SCdLSS2024}.

\begin{Pro}[\cite{CCWY2019,CCJWY2020}]\label{Herglotz_Lie}
	Let $L$ satisfy conditions \mbox{\rm (L1)-(L3)}. Then for fixed $x,y\in\R^n$, $t>0$ and $u\in\R$, the variational problem \eqref{eq:fundamental_solution} admits a minimizer. Moreover,
\begin{enumerate}[\rm (a)]
  \item Both $\xi$ and $u_{\xi}$ are of class $C^2$ and $\xi$ satisfies Herglotz equation
  \begin{equation}\label{eq:Herglotz}
	\begin{split}
		&\,\frac d{ds}L_v(\xi(s),\dot{\xi}(s),u_{\xi}(s))\\
	=&\,L_x(\xi(s),\dot{\xi}(s),u_{\xi}(s))+L_u(\xi(s),\dot{\xi}(s),u_{\xi}(s))L_v(\xi(s),\dot{\xi}(s),u_{\xi}(s)).
	\end{split}
  \end{equation}
       for all $s\in[0,t]$ where $u_{\xi}$ is the unique solution of \eqref{eq:Caratheodory_ODE};
  \item Let $p(s)=L_v(\xi(s),\dot{\xi}(s),u_{\xi}(s))$ be the dual arc, then $p$ is also of class $C^2$ and we conclude that $(\xi,p,u_{\xi})$ satisfies Lie equation
  \begin{equation}\label{eq:Lie}
  	\begin{cases}
  		\dot{\xi}(s)=H_p(\xi(s),p(s),u_{\xi}(s));\\
  		\dot{p}(s)=-H_x(\xi(s),p(s),u_{\xi}(s))-H_u(\xi(s),p(s),u_{\xi}(s))p(s);\\
  		\dot{u}_{\xi}(s)=p(s)\cdot\dot{\xi}(s)-H(\xi(s),p(s),u_{\xi}(s)).
  	\end{cases}
  \end{equation}
\end{enumerate}
\end{Pro}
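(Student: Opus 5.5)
The plan is to obtain a minimizer by the direct method in the class of absolutely continuous curves, and then to derive regularity and the Herglotz/Lie equations from a first variation of the implicit action. First I need a priori control of $u_\xi$. Comparing with $L(x,v,0)$ via (L3), $|L(x,v,r)-L(x,v,0)|\leqslant K|r|$, so along any curve
\[
\dot u_\xi\geqslant \theta_0(|\dot\xi|)-c_0-K|u_\xi|.
\]
A Gronwall argument then gives $u_\xi\geqslant u-C(t)$ uniformly. Testing the straight segment from $x$ to $y$ gives a finite upper bound for the infimum. Feeding the lower bound for $u_\xi$ back in, the integral $\int_0^t\theta_0(|\dot\xi|)\,ds$ is bounded along a minimizing sequence. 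By superlinearity (Dunford--Pettis) $\dot\xi_k\rightharpoonup\dot\xi$ weakly in $L^1$, $\xi_k\to\xi$ uniformly, and the $u_{\xi_k}$ are equi-bounded.

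The key step is lower semicontinuity of $\xi\mapsto u_\xi(t)$, which is harder than in the classical case because $u$ enters $L$ nonlinearly. I would first pass to a subsequence with $u_{\xi_k}\to \bar u$ pointwise; this needs equicontinuity, which follows from the equi-integrability of $L(\xi_k,\dot\xi_k,u_{\xi_k})$ given by the bounds above. Then I would use convexity in $v$ (Ioffe's theorem, or the standard Tonelli argument) to get, for each $s$,
\[
\bar u(s)-u\geqslant\int_0^sL(\xi,\dot\xi,\bar u)\,d\tau .
\]
Finally, the comparison principle for the ODE \eqref{eq:Caratheodory_ODE} (Lipschitz in $r$ by (L3)) gives $u_\xi\leqslant\bar u$, hence $u_\xi(t)-u\leqslant\liminf_k\{u_{\xi_k}(t)-u\}$. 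So $\xi$ is a minimizer. I expect this semicontinuity-with-coupling step to be the main obstacle.

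For (a), I would vary $\xi+\varepsilon\eta$ with $\eta\in C_c^\infty$. Differentiating $u_{\xi+\varepsilon\eta}$ gives a linear ODE
\[
\dot w=L_x\cdot\eta+L_v\cdot\dot\eta+L_u\,w,\qquad w(0)=0,
\]
so with $\sigma(s)=\exp\bigl(-\int_0^sL_u\bigr)$ we get $w(t)=\sigma(t)^{-1}\int_0^t\sigma\,(L_x\eta+L_v\dot\eta)\,ds=0$. This is the integrated (DuBois--Reymond) form
\[
\sigma L_v=\int\sigma L_x+\mathrm{const}.
\]
The right-hand side is absolutely continuous, so $L_v(\xi,\dot\xi,u_\xi)$ is continuous. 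Strict convexity in $v$ makes $v\mapsto L_v$ invertible, so $\dot\xi$ is continuous. Bootstrapping through the inverse function theorem ($L\in C^2$, with invertibility of $L_{vv}$ taken as part of the Tonelli setting) gives $\xi,u_\xi\in C^2$. Differentiating then yields \eqref{eq:Herglotz}. A Lipschitz bound on $\dot\xi$ (needed before bootstrapping) comes from the integral form together with the boundedness of $u_\xi$, in the style of Clarke--Vinter.

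For (b), I would set $p=L_v$ and use the Legendre identities $H_p(x,p,r)=v$, $H_x=-L_x$, $H_r=-L_r$ and $H=p\cdot v-L$. Rewriting \eqref{eq:Herglotz} and \eqref{eq:Caratheodory_ODE} with these gives exactly \eqref{eq:Lie}. Then $p\in C^1$, and since the right-hand side of the Lie system is $C^1$ along a $C^1$ solution, $p\in C^2$.
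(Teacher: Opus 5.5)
You should know that the paper gives no proof of this proposition: it is imported from \cite{CCWY2019,CCJWY2020}, as is the a priori bound \eqref{eq:Lip_xi_dot_1} on $|\dot\xi|$ used later. Judged on its own, your part (b) is fine and the existence argument can be repaired. Part (a), however, has a genuine gap, and it sits at the step where a Lipschitz bound on $\dot\xi$ is needed, which is also what the paper imports from \cite{CCJWY2020} as \eqref{eq:Lip_xi_dot_1}.

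The first variation $\xi+\varepsilon\eta$ cannot be carried out on a merely absolutely continuous minimizer under (L1)--(L3). These hypotheses put no growth bound on $L_x,L_v$ in terms of $L$, and $\overline{\theta}_0$ may be any superlinear function (think of $L$ of order $e^{|v|^2}$). So a priori nothing prevents $\int_0^tL(\xi+\varepsilon\eta,\dot\xi+\varepsilon\dot\eta,\cdot)\,ds$ from being $+\infty$ for $\varepsilon\neq0$, and even when it is finite nothing dominates the difference quotients. Neither your linearized equation for $w$ nor $w(t)=0$ is justified, and the DuBois--Reymond identity $\sigma L_v=\int\sigma L_x+\mathrm{const}$ becomes available only \emph{after} one knows $\dot\xi\in L^\infty$. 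Your remark that the Lipschitz bound ``comes from the integral form'' is therefore circular. Note also that $\Gamma^t_{x,y}$ consists of $C^2$ curves, so even the existence claim as stated depends on this step.

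The Lipschitz bound has to be proved first, by an inner variation (time reparametrization) in the spirit of Clarke--Vinter, which never increases speed where $\dot\xi$ is large. Write the reparametrized cost through the perspective function $\mu\mapsto\mu L(x,v/\mu,r)$, which is convex in $\mu$. Slow down on $\{|\dot\xi|>R\}$: integrability is preserved because $\mu L(x,v/\mu,r)\leqslant L(x,v,r)+(\mu-1)L(x,0,r)$ for $\mu\geqslant1$, and the difference quotients are monotone and bounded above by $L(x,0,r)$, so monotone convergence applies. Compensate by speeding up on $\{|\dot\xi|\leqslant R\}$, and control the coupling through the Carath\'eodory equation by Gronwall. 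The first-order condition then gives a one-sided, nonsmooth substitute for Lemma~\ref{energy}: $E(s)=e^{-\int_0^sL_u}(L_v\cdot\dot\xi-L)$ is essentially bounded above on $\{|\dot\xi|>R\}$ by its mean over $\{|\dot\xi|\leqslant R\}$. Since $u_\xi$ is bounded and $|L_u|\leqslant K$, this bounds $H(\xi,p,u_\xi)$ and hence $|\dot\xi|$. After that, your first variation, the integral form and the bootstrap to $C^2$ go through as written.

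Two smaller repairs are needed in the existence part.
\begin{itemize}
\item The bound on $\int_0^t\theta_0(|\dot\xi_k|)\,ds$ makes $\dot\xi_k$ equi-integrable, but $L(\xi_k,\dot\xi_k,u_{\xi_k})$ is only bounded in $L^1$, so your claimed equicontinuity of $u_{\xi_k}$ is unjustified. It is also unnecessary. Since $\dot u_{\xi_k}\geqslant-C$ uniformly, $s\mapsto u_{\xi_k}(s)+Cs$ is nondecreasing and equibounded, and Helly's theorem gives convergence at every $s$. The same lower bound, combined with the bound at $s=t$, gives equiboundedness from above on all of $[0,t]$.
\item Because $L$ is not monotone in $r$, the inequality $\bar u(s)-u\geqslant\int_0^sL(\xi,\dot\xi,\bar u)$, taken only from $0$, does not imply $u_\xi\leqslant\bar u$. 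Apply Ioffe's theorem on every $[s_1,s_2]$ to get $\bar u(s_2)-\bar u(s_1)\geqslant\int_{s_1}^{s_2}L(\xi,\dot\xi,\bar u)\,d\tau$. With this the Gronwall comparison works, and the possible upward jumps of $\bar u$ are harmless.
\end{itemize}
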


An associated variational problem of Herglotz' type is as follows:
\begin{equation}\label{eq:fundamental_solution_2}
	\breve{h}_L(t,x,y,u):=\inf_{\xi}\int^t_0L(\xi(s),\dot{\xi}(s),w_{\xi}(s))\ ds
\end{equation}
where the infimum is taken over all $\xi\in\Gamma^t_{x,y}$ such that a terminal condition problem of Carath\'eodory equation
\begin{equation}\label{eq:caratheodory_L_2}
	\begin{cases}
		\dot{w}_{\xi}(s)=L(\xi(s),\dot{\xi}(s),w_{\xi}(s)),\quad s\in[0,t],&\\
		w_{\xi}(t)=u,&
	\end{cases}
\end{equation}
admits a (unique) solution.

We call the function $h_L(t,x,y,u)$ (resp. $\breve{h}_L(t,x,y,u)$) the {\em negative} (resp. {\em positive}) {\em type fundamental solution} for the Hamilton-Jacobi equation
\begin{equation}\label{eq:HJe_t}\tag{HJ}
D_tu(t,x)+H(x,D_xu(t,x),u(t,x))=0.
\end{equation}


Now we can formulate main results of this paper.

\begin{Result}[First order regularity]
Let $\xi\in\Gamma^t_{x,y}$ be a minimizer of \eqref{eq:fundamental_solution} and $u_{\xi}(s)$ be the unique solution of \eqref{eq:Caratheodory_ODE}.
\begin{itemize}
	\item For any $s\in[0,t]$, the sensitive relation $p_y(s):=L_v(\xi(s),\dot{\xi}(s),u_{\xi}(s))\in D^+_{y}h_L(s,x,\xi(s),u)$ holds. Moreover, in the case that $h_L(t,x,\cdot,u)$ is differentiable at $y$, we conclude that
	    \begin{align*}
		D_yh_L(t,x,y,u)=L_v(\xi(t),\dot{\xi}(t),u_{\xi}(t)).
		\end{align*}
	\item We have that $p_x(t):=-e^{\int^t_0L_u(\xi,\dot{\xi},u_{\xi})d\tau}\cdot L_v(\xi(0),\dot{\xi}(0),u_{\xi}(0))\in D^+_xh_L(t,x,y,u)$. Moreover, if $h_L(t,\cdot,y,u)$ is differentiable at $x$, then $D_xh_L(t,x,y,u)=p_x(t)$.
	\item We have that $p_u(t):=e^{\int^t_0L_u(\xi,\dot{\xi},u_{\xi})\ d\tau}-1\in D^+_uh_L(t,x,y,u)$. Moreover, if $h_L(t,x,y,\cdot)$ is differentiable at $u$, then $D_uh_L(t,x,y,u)=p_u(t)$.
	\item We have that $-e^{\int^t_0L_u(\xi,u_{\xi},\dot{\xi})\ ds}\cdot E(t)\in D^+_th_L(t,x,y,u)$. Moreover, if $h_L(\cdot,x,y,u)$ is differentiable at $t$, then $D_th_L(t,x,y,u)=-e^{\int^t_0L_u(\xi,\dot{\xi},u_{\xi})\ ds}\cdot E(t)=-H(\xi(t),p(t),u_{\xi}(t))$, where
	\begin{align*}
		E(s):=e^{-\int^s_0L_u(\xi,\dot{\xi},u_{\xi})\ d\tau}\cdot[L_v(\xi(s),\dot{\xi}(s),u_{\xi}(s))\cdot\dot{\xi}(s)-L(\xi(s),\dot{\xi}(s),u_{\xi}(s))],\qquad s\in[0,t].
	\end{align*}
\end{itemize}
\end{Result}

\begin{Result}[Second order regularity]
\hfill
\begin{itemize}
	\item \textbf{Semiconcavity}: For any $z$, $w\in\R^n$ and $h$, $k\in\R$ with $|z|,|w|\leqslant\lambda t$, $|k|\leqslant\delta$, $|h|<\frac t2$, there exist constants $C>0$ such that
	\begin{align*}
		h_L(&\,t+h,x+z,y+w,u+k)+h_L(t-h,x-z,y-w,u-k)-2h_L(t,x,y,u)\\
		\leqslant&\, \frac {C}t(|h|^2+|z|^2+|w|^2)+C|k|^2.
	\end{align*}
	\item \textbf{Semiconvexity for short time}: For any $\lambda,\delta>0$ there exists $t_{\lambda,\delta}>0$ such that for any $x\in\mathbb{R}^n$, the function $(t,y,u)\mapsto h_L(t,x,y,u)$ is semiconvex on the cone
	\begin{align*}
		S_{\lambda,\delta}=\{(t,y,u): t\in(0,t_{\lambda,\delta}], |y-x|<\lambda t,|u|<\delta\}.
	\end{align*}
	\item \textbf{Convexity in $x$ for short time}: In particular, there exist $t^\prime_{\lambda,\delta}$ and $C_2>0$ depending on $\lambda,\delta$, such that for all $t\in (0,t^\prime_{\lambda,\delta}]$ the function $h_L(t,x,\cdot,u)$ is uniformly convex on $B(x,\lambda t)$, i.e., for all $|y-x|<\lambda t$ and $|z|<\lambda t$ we have that
	\begin{align*}
		h_L(t,x,y+z,u)+h_L(t,x,y-z,u)-2h_L(t,x,y,u)
		\geqslant\,\frac{C_2}{t}|z|^2.
	\end{align*}
	\item \textbf{Convexity in $t$ for short time}: Fix $\lambda>0$. Let $x,y\in\R^n$, $u,h\in\R$ and $0<t\leqslant 1$ such that $|y-x|\leqslant\lambda t$, $|u|\leqslant \delta$ and $|h|<\frac{t}{2}$. Suppose that $\xi_\pm$ is a minimal curve for $h_L(t\pm h,x,y,u)$ and
	\begin{align*}
		\inf_{s\in [0,t+h]}|\dot{\xi}_+|^2+\inf_{s\in [0,t-h]}|\dot{\xi}_-|^2\geqslant C_1,
	\end{align*}
	where $C_1>0$ is a constant depending on $\lambda,\delta$. Then for any $x\in\mathbb{R}^n$, the function $t\rightarrow h_L(t,x,y,u)$ is uniformly convex. Thus, there exists $C_2$ such that
	\begin{align*}
		h_L(t+h,x,y,u)+h_L(t-h,x,y,u)-2h_L(t,x,y,u)\geqslant\,\frac{C_2}{t}|h|^2.
	\end{align*}
	\item \textbf{$C^{1,1}$ regularity}: Fix $\lambda>0$, $x\in\R^n$, then there exist constant $t_{\lambda}$ and $C_{\lambda}>0$ depending on $\lambda,\,x$ and $u$ such that the map $(t,y,u)\mapsto h_L(t,x,y,u)$ is locally $C^{1,1}$ on the set
	$$S=\{(t,y,u)\in \mathbb{R}\times \mathbb{R}^n\times\mathbb{R}:0<t<t_\lambda,\,|y-x|<\lambda t,\,|u|<\delta\}.$$
	Moreover, for all $(t,y,u)\in S$,
	\begin{align*}
	                 D_th_L(t,x,y,u)=&\,-e^{\int^t_0L_u(\xi,u_{\xi},\dot{\xi})\ ds}E(t)=-H(\xi(t),u_{\xi}(t),p(t)),\\
		D_yh_L(t,x,y,u)=&\,L_v(\xi(t),u_{\xi}(t),\dot{\xi}(t)),\\
		D_xh_L(t,x,y,u)=&\,-e^{\int^t_0L_u(\xi,u_{\xi},\dot{\xi})d\tau}\cdot L_v(\xi(0),u_{\xi}(0),\dot{\xi}(0)),\\
		D_uh_L(t,x,y,u)=&\,e^{\int^t_0L_u(\xi,u_{\xi},\dot{\xi})\ d\tau}-1,
	\end{align*}
	where $\xi\in\Gamma^t_{x,y}$ is the unique minimizer for $h_L(t,x,y,u)$ with $u_{\xi}$ determined by \eqref{eq:Caratheodory_ODE} and $p(\cdot)$ is the dual arc of $\xi(\cdot)$.
\end{itemize}
\end{Result}

The application of our intrinsic method to generalized Hamiltonian gradient flows of contact type relies crucially on obtaining qualitative estimates for the first- and second-order regularity of the fundamental solution, as well as on clarifying its relationship with the underlying contact Hamiltonian system. In fact, such regularity results are also essential for many other applications, including Lasry--Lions regularization (\cite{Bernard2007,Bernard2010,Bernard2012,Chen_Cheng2016}), Arnaud's theorem on the evolution of the enlarged graph of $Du$ for a semiconcave function $u$ (\cite{Arnaud2011}), SBV regularity of viscosity solutions (\cite{Bianchini_Tonon2012}), Lax--Oleinik commutators (\cite{Cannarsa_Cheng_Hong2025}) and related problems in optimal transport (\cite{Bernard_Buffoni2007a,CCSW2025}), and the global propagation of singularities for viscosity solutions (\cite{Cannarsa_Cheng3,Cannarsa_Cheng_Fathi2017,Cannarsa_Cheng_Fathi2021,CCHW2024}).

The paper is organized as follows. In Section~2, we collect the main first- and second-order regularity properties of the fundamental solution. All detailed proofs of the statements in Section~2 are provided in Section~3.

\medskip

\noindent\textbf{Acknowledgements.} Wei Cheng was partially supported by the National Natural Science Foundation of China (Grant No. 12231010). Shengqing Hu was partially supported by the National Natural Science Foundation of China (Grant No. 12201532). Kaizhi Wang was partially supported by the National Natural Science Foundation of China (Grant Nos. 12525107 and 12171315).

\section{First and second order regularity of $h_L$}
\label{sec:statement_regularity}

In this section we collect several first- and second-order regularity results for $h_{L}$. Since the arguments involve many technical details, we postpone all proofs to the next section. This presentation will help the reader grasp the main ideas of the paper without being burdened by the technicalities.

We begin by recalling basic notions of semiconcave and semiconvex functions and their corresponding super- and sub‑differentials. Let $\Omega\subset\mathbb{R}^{n}$ be a convex set. A function $u:\Omega\to\mathbb{R}$ is called \emph{semiconcave} (with linear modulus) if there exists a constant $C>0$ such that
\begin{equation}\label{eq:SCC}
    \lambda u(x)+(1-\lambda)u(y)-u\bigl(\lambda x+(1-\lambda)y\bigr)\leqslant\frac{C}{2}\,\lambda(1-\lambda)|x-y|^{2}
\end{equation}
holds for every $x,y\in\Omega$ and every $\lambda\in[0,1]$. Any constant $C$ satisfying the above inequality is called a \emph{semiconcavity constant} of $u$ on $\Omega$. A function $u:\Omega\to\mathbb{R}$ is said to be \emph{semiconvex} if $-u$ is semiconcave. When $u$ is continuous, it can be shown that $u$ is semiconcave with constant $C$ if and only if
\[
    u(x)+u(y)-2u\!\left(\frac{x+y}{2}\right)\leqslant\frac{C}{2}\,|x-y|^{2}
\]
for all $x,y\in\Omega$. We say that $u$ is \emph{locally semiconcave} (respectively, \emph{locally semiconvex}) if for each $x\in\Omega$ there exists an open ball $B(x,r)\subset\Omega$ such that $u$ is semiconcave (respectively, semiconvex) on $B(x,r)$. For a continuous function $u:\Omega\subset\mathbb{R}^{n}\to\mathbb{R}$ and any point $x\in\Omega$, the closed convex sets
\begin{align*}
    D^{-}u(x)&=\Bigl\{p\in\mathbb{R}^{n}:\liminf_{y\to x}\frac{u(y)-u(x)-\langle p,y-x\rangle}{|y-x|}\geqslant0\Bigr\},\\[2mm]
    D^{+}u(x)&=\Bigl\{p\in\mathbb{R}^{n}:\limsup_{y\to x}\frac{u(y)-u(x)-\langle p,y-x\rangle}{|y-x|}\leqslant0\Bigr\},
\end{align*}
are called, respectively, the \emph{(Dini) subdifferential} and the \emph{superdifferential} of $u$ at $x$.

\subsection{First order regularity of $h_L$ and its dynamical implication}

A preliminary inspection shows that the function $(t,x,y,u)\mapsto h_L(t,x,y,u)$ is locally semiconcave (see Theorem~\ref{semiconcavity} below). Consequently, the superdifferential $D^+h_L$ is well defined and forms a non‑empty, closed, convex subset of the ambient Euclidean space. The following first‑order properties of the fundamental solution $h_L$ will be essential for a deeper analysis of the links between Hamilton–Jacobi theory and the underlying dynamics.

\begin{Pro}\label{D_h_L}
	Let $\xi\in\Gamma^t_{x,y}$ be a minimizer of \eqref{eq:fundamental_solution} and $u_{\xi}(s)$ be the unique solution of \eqref{eq:Caratheodory_ODE}. Then, we have the following sensitive relations:
	\begin{enumerate}[\rm (a)]
		\item For any $s\in[0,t]$, we have the sensitive relation
		\begin{equation}\label{eq:p_in_superdiff}
	         p_y(s):=L_v(\xi(s),\dot{\xi}(s),u_{\xi}(s))\in D^+_{y}h_L(s,x,\xi(s),u).
	    \end{equation}
	    Moreover, in the case that $h_L(t,x,\cdot,u)$ is differentiable at $y$, we conclude that
	    \begin{align*}
		D_yh_L(t,x,y,u)=L_v(\xi(t),\dot{\xi}(t),u_{\xi}(t)).
		\end{align*}
		\item We have that
		\begin{equation}\label{eq:x_in_superdiff}
		p_x(t):=-e^{\int^t_0L_u(\xi,\dot{\xi},u_{\xi})d\tau}\cdot L_v(\xi(0),\dot{\xi}(0),u_{\xi}(0))\in D^+_xh_L(t,x,y,u).
		\end{equation}
		Moreover, if $h_L(t,\cdot,y,u)$ is differentiable at $x$, then $D_xh_L(t,x,y,u)=p_x(t)$.
		\item We have that
		\begin{equation}\label{eq:u_in_superdiff}
		p_u(t):=e^{\int^t_0L_u(\xi,\dot{\xi},u_{\xi})\ d\tau}-1\in D^+_uh_L(t,x,y,u).
		\end{equation}
		Moreover, if $h_L(t,x,y,\cdot)$ is differentiable at $u$, then $D_uh_L(t,x,y,u)=p_u(t)$.
	\end{enumerate}
\end{Pro}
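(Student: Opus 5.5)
The proof rests on the standard competitor-curve argument. Each superdifferential inclusion comes from building, for a perturbed endpoint or initial value, an admissible curve close to the minimizer $\xi$. Comparing the resulting action with $h_L$ then gives an upper bound whose first-order term is the claimed vector.

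The key tool is the linearization of the Carath\'eodory equation \eqref{eq:Caratheodory_ODE}. Let $\xi^\varepsilon=\xi+\varepsilon\eta$ with $\eta\in C^2$, and let the initial value be $u+\varepsilon k$. Then $u_{\xi^\varepsilon}=u_\xi+\varepsilon w+o(\varepsilon)$, uniformly in $s$, where
\[
\dot w=L_x\eta+L_v\dot\eta+L_u w,\qquad w(0)=k,
\]
with all derivatives evaluated along $(\xi,\dot\xi,u_\xi)$. This follows from Gronwall and the $C^2$ regularity of $L$. Set $\mu(s)=e^{-\int_0^sL_u\,d\tau}$. By variation of constants,
\[
\mu(t)w(t)=k+\int_0^t\mu\,(L_x\eta+L_v\dot\eta)\,ds .
\]
Integrate the $L_v\dot\eta$ term by parts. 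The Herglotz equation \eqref{eq:Herglotz} gives $\frac{d}{ds}(\mu L_v)=\mu L_x$, so the integrand becomes an exact derivative and
\[
w(t)=\mu(t)^{-1}\bigl[k+\mu(t)p(t)\cdot\eta(t)-p(0)\cdot\eta(0)\bigr],\qquad p(s)=L_v(\xi(s),\dot\xi(s),u_\xi(s)).
\]
To make this uniform in the perturbation direction, I will in practice use the explicit competitor $\eta(s)=(1-s/t)z+(s/t)w'$, writing $w'$ for the endpoint displacement so as not to clash with the linearized variable $w$. The remainder is then $o(|z|+|w'|+|k|)$ uniformly, which is what the Dini superdifferential requires.

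\textbf{(a).} Fix $s\in(0,t]$ and note that $\xi|_{[0,s]}$ is admissible for $h_L(s,x,\xi(s),u)$. Perturb only the endpoint, with $\eta(0)=0$, $\eta(s)=w'$, $k=0$. Then
\[
h_L(s,x,\xi(s)+w',u)\le u_{\xi^\varepsilon}(s)-u .
\]
By Bellman's principle, $h_L(s,x,\xi(s),u)=u_\xi(s)-u$: a better curve on $[0,s]$ would give a smaller $u$-value at time $s$, and then by monotonicity of the ODE flow in the initial datum (comparison) a smaller $u$-value at time $t$, contradicting minimality. Applying the formula above on $[0,s]$ shows that the first-order gain is $p(s)\cdot w'$. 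This is the upper inequality defining $D^+$.

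\textbf{(b) and (c).} For (b), perturb only the initial point: $\eta(0)=z$, $\eta(t)=0$, $k=0$. This gives the first-order term $-\mu(t)^{-1}p(0)\cdot z=p_x(t)\cdot z$. For (c), keep the curve fixed and perturb only the initial value, so $\eta=0$; the linearization gives $u_\xi(t)$ increasing by $\mu(t)^{-1}k$. Since $h_L=u_\xi(t)-u$, the first-order term is $(\mu(t)^{-1}-1)k=p_u(t)k$.

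In each case, if $h_L$ is differentiable in the relevant variable, then $D^+$ reduces to the gradient, which yields the equalities.

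The only delicate points are the uniformity of the $o(\cdot)$ remainder and the Bellman/comparison step in (a). Both follow from Gronwall's inequality together with the bound $|L_u|\le K$ from (L3).
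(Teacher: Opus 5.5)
Your proposal is correct and follows essentially the same route as the paper: linear-interpolation competitor curves, the linearized Carath\'eodory equation solved by variation of constants, and the Herglotz equation turning $\mu(L_x\eta+L_v\dot\eta)$ into the exact derivative $\frac{d}{ds}(\mu L_v\cdot\eta)$. Your version is slightly more careful in two places. The joint parametrization in $(z,w',k)$ gives a remainder that is uniform in the direction, whereas the paper only bounds one-sided directional derivatives along $\lambda\theta$, which yields Dini superdifferential membership only because $h_L$ is semiconcave. Your Bellman/comparison argument also justifies the intermediate times $s<t$ in (a), which the paper leaves implicit.
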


\begin{defn}
	For any extremal $(\xi,u_{\xi})$ for \eqref{eq:Herglotz} on $[0,t]$, we define the function $E=E_\xi:[0,t]\to\R$ with respect to $L$ as 
\begin{align*}
	E(s):=e^{-\int^s_0L_u(\xi,\dot{\xi},u_{\xi})\ d\tau}\cdot[L_v(\xi(s),\dot{\xi}(s),u_{\xi}(s))\cdot\dot{\xi}(s)-L(\xi(s),\dot{\xi}(s),u_{\xi}(s))].
\end{align*}
For any solution $(\xi,p,u_{\xi})$ for \eqref{eq:Lie} on $[0,t]$, we also define the function $E^*=E^*_\xi:[0,t]\to\R$ with respect to $H$ as
\begin{align*}
	E^*(s):=e^{\int^s_0H_u(\xi,p,u_{\xi})\ d\tau}\cdot H(\xi(s),p(s),u_{\xi}(s)).
\end{align*}
\end{defn}

\begin{Lem}\label{energy}
Both $E$ and $E^*$ are constant on $[0,t]$.
\end{Lem}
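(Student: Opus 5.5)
The plan is to differentiate both quantities directly along the extremal and check that the derivatives vanish. This uses only the regularity from Proposition~\ref{Herglotz_Lie}: $\xi$, $u_\xi$ and $p$ are all $C^2$, so $E$ and $E^*$ are $C^1$. We also use the Herglotz equation \eqref{eq:Herglotz} and the Lie equation \eqref{eq:Lie}.

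\textbf{Conservation of $E^*$.} Write $H(s)$ for $H(\xi(s),p(s),u_\xi(s))$ and similarly for its partial derivatives. The chain rule gives
\begin{align*}
\frac{d}{ds}H(s)=H_x\cdot\dot\xi+H_p\cdot\dot p+H_u\,\dot u_\xi .
\end{align*}
We substitute \eqref{eq:Lie}: $\dot\xi=H_p$, $\dot p=-H_x-H_u p$, and $\dot u_\xi=p\cdot\dot\xi-H=p\cdot H_p-H$. The terms $H_x\cdot H_p$ cancel. The term $-H_u\,p\cdot H_p$ coming from $H_p\cdot\dot p$ cancels against $H_u\,p\cdot H_p$ coming from $\dot u_\xi$. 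This leaves $\frac{d}{ds}H(s)=-H_u H$. Hence
\begin{align*}
\frac{d}{ds}E^*(s)=e^{\int_0^sH_u\,d\tau}\bigl(H_uH+\tfrac{d}{ds}H\bigr)=0 .
\end{align*}

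\textbf{Conservation of $E$.} One option is to repeat the computation in Lagrangian form. Set $\ell(s)=L_v\cdot\dot\xi-L$, evaluated along $(\xi,\dot\xi,u_\xi)$. Then
\begin{align*}
\dot\ell=\tfrac{d}{ds}L_v\cdot\dot\xi+L_v\cdot\ddot\xi-L_x\cdot\dot\xi-L_v\cdot\ddot\xi-L_u\dot u_\xi .
\end{align*}
Using \eqref{eq:Herglotz} for $\tfrac{d}{ds}L_v$ and $\dot u_\xi=L$, this becomes $L_uL_v\cdot\dot\xi-L_uL=L_u\ell$. Therefore $\frac{d}{ds}\bigl(e^{-\int_0^sL_u}\ell\bigr)=0$. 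Alternatively, one can use Legendre duality: with $p=L_v$ we have $H(\xi,p,u_\xi)=p\cdot\dot\xi-L=\ell$ and $H_u=-L_u$. This shows that $E$ and $E^*$ are in fact the same function, so either computation suffices.

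\textbf{Main point of care.} There is no real obstacle here. The only thing to check is that the needed regularity holds: we must be able to differentiate $L_v$ along the curve and use $\ddot\xi$. This is guaranteed by Proposition~\ref{Herglotz_Lie}(a)--(b). We also need the duality identities $H_u(x,L_v,u)=-L_u(x,v,u)$ and $H(x,L_v,u)=L_v\cdot v-L$. These are standard consequences of the $C^2$ strict convexity in (L1) and the envelope theorem.
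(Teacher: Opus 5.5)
Your proof is correct and is the paper's argument. The paper differentiates $e^{-\int_0^s L_u\,d\tau}\bigl(L-L_v\cdot\dot\xi\bigr)$ and cancels the derivative using the Herglotz equation and $\dot u_\xi=L$, exactly as in your Lagrangian computation, and it dismisses $E^*$ as ``similar''; your Lie-equation computation giving $\frac{d}{ds}H=-H_uH$ supplies that case explicitly, and your Legendre-duality remark that $E=E^*$ along an extremal is a correct addition the paper does not state.
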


\begin{Pro}\label{D_t}
Let $\xi\in\Gamma^t_{x,y}$ be a minimizer of \eqref{eq:fundamental_solution} and let $u_{\xi}(s)$ be the unique solution of \eqref{eq:Caratheodory_ODE}. Then $-e^{\int^t_0L_u(\xi,u_{\xi},\dot{\xi})\ ds}\cdot E(t)\in D^+_th_L(t,x,y,u)$. Moreover, if $h_L(\cdot,x,y,u)$ is differentiable at $t$, then
\begin{equation}\label{diff_of_fund_sol_t}
\,D_th_L(t,x,y,u)=-e^{\int^t_0L_u(\xi,\dot{\xi},u_{\xi})\ ds}\cdot E(t)=-H(\xi(t),p(t),u_{\xi}(t)).
\end{equation}
\end{Pro}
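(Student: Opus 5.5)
We prove the superdifferential inclusion by comparison with explicitly constructed competitor curves, following the same scheme we expect for Proposition~\ref{D_h_L}; the identification with $-H$ then comes from Lemma~\ref{energy} and Legendre duality. Fix the minimizer $\xi$ on $[0,t]$ and, for small $|h|$, the time-rescaled curve
\[
\xi_h(s)=\xi\Bigl(\tfrac{t}{t+h}s\Bigr),\qquad s\in[0,t+h],
\]
which belongs to $\Gamma^{t+h}_{x,y}$. Let $u_h$ solve \eqref{eq:Caratheodory_ODE} along $\xi_h$ with $u_h(0)=u$. By definition of $h_L$ and since $\xi$ attains $h_L(t,x,y,u)$,
\[
h_L(t+h,x,y,u)-h_L(t,x,y,u)\leqslant u_h(t+h)-u_\xi(t).
\]
The target is therefore the expansion $u_h(t+h)-u_\xi(t)=-e^{\int_0^tL_u\,d\tau}E(t)\,h+o(h)$ as $h\to0^\pm$. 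Since this is a two-sided estimate, it gives the inclusion in $D^+_th_L$.

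For the expansion, change variables $s=\frac{t+h}{t}\sigma$ and set $v_h(\sigma)=u_h\bigl(\frac{t+h}{t}\sigma\bigr)$. Writing $\mu=\frac{t+h}{t}$, this gives
\[
\dot v_h(\sigma)=\mu\,L\bigl(\xi(\sigma),\mu^{-1}\dot\xi(\sigma),v_h(\sigma)\bigr),\qquad v_h(0)=u,\quad \sigma\in[0,t].
\]
Differentiate in $h$ at $h=0$ and put $w=\partial_hv_h|_{h=0}$. This $w$ solves the linear ODE
\[
\dot w=L_u\,w+\tfrac1t\bigl(L-L_v\cdot\dot\xi\bigr),\qquad w(0)=0,
\]
with all terms evaluated along $(\xi,\dot\xi,u_\xi)$. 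Smooth dependence on parameters is justified because $L\in C^2$ and $\xi\in C^2$ on the compact interval. Variation of constants gives
\[
w(t)=-\frac1t\int_0^t e^{\int_\sigma^tL_u\,d\tau}\bigl(L_v\cdot\dot\xi-L\bigr)(\sigma)\,d\sigma=-\frac1t\,e^{\int_0^tL_u\,d\tau}\int_0^tE(\sigma)\,d\sigma .
\]
By Lemma~\ref{energy}, $E$ is constant, so $w(t)=-e^{\int_0^tL_u\,d\tau}E(t)$, which is exactly the claimed element. This gives $u_h(t+h)=v_h(t)=u_\xi(t)+w(t)h+o(h)$.

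If $h_L(\cdot,x,y,u)$ is differentiable at $t$, the superdifferential reduces to the derivative, which yields the first equality in \eqref{diff_of_fund_sol_t}. For the second equality, note that at $s=t$ the factors $e^{\pm\int_0^t L_u}$ cancel, so
\[
e^{\int_0^tL_u}E(t)=L_v\cdot\dot\xi-L=H(\xi(t),p(t),u_\xi(t))
\]
by the Fenchel equality, using $p=L_v$ and $\dot\xi=H_p$ from \eqref{eq:Lie}.

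The main technical point is the uniformity of the remainder $o(h)$. We handle it via differentiability of ODE solutions with respect to the parameter $\mu$, on a compact range of $(\xi,\dot\xi)$ and with $L_u$ bounded by (L3). This is routine once the rescaling is set up; the only genuinely nontrivial input is the constancy of $E$ from Lemma~\ref{energy}.
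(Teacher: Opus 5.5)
Your proposal is correct and is essentially the paper's proof: you use the same time-rescaled competitor $\xi(\frac{t}{t+h}\,\cdot)$, the same linearized equation $\dot f=L_uf+\frac1t(L-L_v\cdot\dot\xi)$ with $f(0)=0$ solved by variation of constants, and the same appeal to Lemma~\ref{energy} to replace $\frac1t\int_0^tE$ by $E(t)$. Your explicit remark that the first-order expansion holds for both signs of $h$, which is what the superdifferential inclusion requires, is slightly cleaner than the paper's $\limsup$ of the difference quotient as $h\to0$, since that quotient's inequality is only valid for $h>0$.
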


In the last part of this section, we discuss the relationship between $h_L$ and $\breve{h}_L$. Define $\breve{L}(x,v,r)=L(x,-v,-r)$. The following result shows that all regularity properties for the positive‑type fundamental solutions can be deduced in a similar manner from our previous results on the negative‑type fundamental solutions.

\begin{Lem}\label{eq:equiv_positive}
	Let $x,y\in\R^n$, $t>0$ and $u\in\R$. Then we have that
	\begin{align*}
		\breve{h}_L(t,x,y,u)=h_{\breve{L}}(t,y,x,-u).
	\end{align*}
	Moreover, for any $\xi\in\Gamma^t_{y,x}$ and $u_{\xi}$ uniquely determined by
	\begin{equation}\label{eq:cara_1}
		\begin{cases}
		\dot{u}_{\xi}(s)=\breve{L}(\xi(s),\dot{\xi}(s),u_{\xi}(s)),\quad a.e. s\in[0,t],&\\
		u_{\xi}(0)=-u,&
		\end{cases}
	\end{equation}
	we define 
	\begin{align*}
		\eta(s)=\xi(t-s),\quad u_{\eta}(s)=-u_{\xi}(t-s),\quad s\in[0,t].
	\end{align*}
	Then $\xi\in\Gamma^t_{y,x}$ is a minimal curve for $h_{\breve{L}}(t,y,x,-u)$ with $u_{\xi}$ uniquely determined by \eqref{eq:cara_1} if and only if $\eta\in\Gamma^t_{x,y}$ is a minimal curve for $\breve{h}_L(t,x,y,u)$ with $u_\eta$ satisfying \eqref{eq:caratheodory_L_2} with respect to $\eta$.
\end{Lem}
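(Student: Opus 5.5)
The plan is to prove both assertions at once by exhibiting an explicit cost-preserving bijection between the two admissible classes, given by time reversal together with the sign change $u\mapsto -u$. The equality $\breve{h}_L(t,x,y,u)=h_{\breve{L}}(t,y,x,-u)$ then follows by taking infima, and the statement about minimal curves follows because a cost-preserving bijection maps minimizers to minimizers.

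\emph{Step 1: the reversal map.} Fix $\xi\in\Gamma^t_{y,x}$ and let $u_\xi$ be the solution of \eqref{eq:cara_1}. It exists globally on $[0,t]$ and is unique, because $\breve{L}$ is $C^2$ and, by (L3), globally Lipschitz in $r$ with constant $K$. Define $\eta(s)=\xi(t-s)$ and $u_\eta(s)=-u_\xi(t-s)$. Then $\eta\in\Gamma^t_{x,y}$, $\dot\eta(s)=-\dot\xi(t-s)$, and
\begin{align*}
\dot u_\eta(s)=\dot u_\xi(t-s)=\breve{L}\bigl(\xi(t-s),\dot\xi(t-s),u_\xi(t-s)\bigr)=L\bigl(\eta(s),\dot\eta(s),u_\eta(s)\bigr).
\end{align*}
Moreover $u_\eta(t)=-u_\xi(0)=u$. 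Hence $u_\eta$ solves the terminal value problem \eqref{eq:caratheodory_L_2} for $\eta$. This solution is unique by the same Lipschitz bound, so $\eta$ is admissible for \eqref{eq:fundamental_solution_2} and $w_\eta=u_\eta$.

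\emph{Step 2: the map is onto, and its inverse.} Conversely, let $\eta\in\Gamma^t_{x,y}$ be admissible with terminal solution $w_\eta$. Set $\xi(s)=\eta(t-s)$ and $u_\xi(s)=-w_\eta(t-s)$. The same computation, read backwards, shows that $u_\xi$ solves \eqref{eq:cara_1}. So the two maps are mutually inverse bijections between $\Gamma^t_{y,x}$ and the admissible subset of $\Gamma^t_{x,y}$. The admissible subset is in fact all of $\Gamma^t_{x,y}$: by (L3) the terminal problem can be solved backward globally on $[0,t]$.

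\emph{Step 3: the costs agree.} Integrating the two ODEs gives
\begin{align*}
\int_0^t L(\eta,\dot\eta,u_\eta)\,ds=u_\eta(t)-u_\eta(0)=u+u_\xi(t)=u_\xi(t)-u_\xi(0)=\int_0^t\breve{L}(\xi,\dot\xi,u_\xi)\,ds.
\end{align*}
Taking the infimum over the bijectively corresponding classes yields $\breve{h}_L(t,x,y,u)=h_{\breve L}(t,y,x,-u)$. Since corresponding curves have equal cost and the infima coincide, $\xi$ attains $h_{\breve L}(t,y,x,-u)$ if and only if $\eta$ attains $\breve h_L(t,x,y,u)$.

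The only delicate point is well-posedness: one must confirm that the admissibility requirement in \eqref{eq:fundamental_solution_2} neither excludes curves nor introduces non-uniqueness. Assumption (L3) settles this, because it makes $r\mapsto L(x,v,r)$ globally Lipschitz, so both the initial and the terminal Carath\'eodory problems have unique global solutions on $[0,t]$. The rest of the argument is the direct computation above.
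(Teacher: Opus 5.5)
Your proof is correct and uses the same idea as the paper: the time reversal $\eta(s)=\xi(t-s)$ together with $u_\eta(s)=-u_\xi(t-s)$, the check that the Carath\'eodory equation for $\breve L$ becomes the terminal-value problem for $L$, and the observation that the costs coincide. The paper writes out only one inequality, starting from a minimizer of $h_{\breve L}$, and calls the converse similar; your bijection between admissible classes gives both inequalities and the correspondence of minimizers at once, without relying on the existence of minimizers.
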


\begin{Pro}\label{D_breave_h_L}
	Let $x,y\in \R^n$, $t>0$ and $u\in\R$. Let $\xi\in\Gamma^t_{x,y}$ be a minimizer for $\breve{h}_L(t,x,y,u)$ and let $u_{\xi}$ be determined by \eqref{eq:caratheodory_L_2} with $u_{\xi}(t)=u$. Then, we have the following relations:
	\begin{equation}\label{eq_relation_differential_breve_h_L}
		\begin{split}
			e^{-\int^t_0L_u(\xi,\dot{\xi},u_{\xi})ds}\cdot L_v(\xi(t),\dot{\xi}(t),u_{\xi}(t))\in D^+_y\breve{h}_L(t,x,y,u),\\
	-L_v(\xi(0),\dot{\xi}(0),u_{\xi}(0))\in D^+_x\breve{h}_L(t,x,y,u),\\
	1-e^{-\int^t_0L_u(\xi,\dot{\xi},u_{\xi})ds}\in D^+_u\breve{h}_L(t,x,y,u).
		\end{split}
	\end{equation}
	In particular, if the right sides of the relations above are singletons when $\breve{h}_L$ is differentiable, then the inclusions  become equalities respectively.
\end{Pro}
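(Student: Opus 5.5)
The plan is to reduce everything to Proposition~\ref{D_h_L}, applied to the reversed Lagrangian $\breve{L}(x,v,r)=L(x,-v,-r)$, using the identity $\breve{h}_L(t,x,y,u)=h_{\breve{L}}(t,y,x,-u)$ from Lemma~\ref{eq:equiv_positive}. Let $\xi$ be a minimizer for $\breve{h}_L(t,x,y,u)$ with $u_\xi(t)=u$. Set $\eta(s)=\xi(t-s)$ and $u_\eta(s)=-u_\xi(t-s)$. Then $\eta\in\Gamma^t_{y,x}$ and $u_\eta(0)=-u$. Moreover
\[
\dot u_\eta(s)=\dot u_\xi(t-s)=L\bigl(\xi(t-s),\dot\xi(t-s),u_\xi(t-s)\bigr)=\breve{L}\bigl(\eta(s),\dot\eta(s),u_\eta(s)\bigr),
\]
since $\dot\eta(s)=-\dot\xi(t-s)$. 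So $u_\eta$ solves \eqref{eq:cara_1}. By the equivalence part of Lemma~\ref{eq:equiv_positive}, read in the appropriate direction, $\eta$ is a minimizer for $h_{\breve{L}}(t,y,x,-u)$.

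Next I compute the derivatives of $\breve{L}$. We have $\breve{L}_v(x,v,r)=-L_v(x,-v,-r)$ and $\breve{L}_u(x,v,r)=-L_u(x,-v,-r)$. Along $\eta$ this gives
\[
\breve{L}_v(\eta(s),\dot\eta(s),u_\eta(s))=-L_v(\xi(t-s),\dot\xi(t-s),u_\xi(t-s)),\qquad \int_0^t\breve{L}_u(\eta,\dot\eta,u_\eta)\,d\tau=-\int_0^t L_u(\xi,\dot\xi,u_\xi)\,d\tau,
\]
where the second identity uses the change of variables $\tau\mapsto t-\tau$.

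Now apply Proposition~\ref{D_h_L} to $h_{\breve{L}}(t,y,x,-u)$, where the initial point is $y$, the terminal point is $x$, and the initial value is $-u$.
\begin{itemize}
\item Part (a) at $s=t$ gives $\breve{L}_v(\eta(t),\dot\eta(t),u_\eta(t))=-L_v(\xi(0),\dot\xi(0),u_\xi(0))$ in the superdifferential with respect to the terminal point. The terminal point of $h_{\breve{L}}$ is the variable $x$ of $\breve{h}_L$, so this is the second relation in \eqref{eq_relation_differential_breve_h_L}.
\item Part (b) gives $-e^{\int_0^t\breve{L}_u}\breve{L}_v(\eta(0),\dot\eta(0),u_\eta(0))=e^{-\int_0^tL_u}L_v(\xi(t),\dot\xi(t),u_\xi(t))$ in the superdifferential with respect to the initial point. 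That point is the variable $y$ of $\breve{h}_L$, which yields the first relation.
\item Part (c) gives $e^{-\int_0^tL_u}-1\in D^+_r h_{\breve{L}}(t,y,x,r)|_{r=-u}$. For $g(u)=f(-u)$ one has $D^+g(u)=-D^+f(-u)$, directly from the definition of the superdifferential. Hence $1-e^{-\int_0^tL_u}\in D^+_u\breve{h}_L(t,x,y,u)$, which is the third relation.
\end{itemize}
Note that $(\breve{L})$ inherits (L1)–(L3) from $L$, so Proposition~\ref{D_h_L} applies.

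The differentiability statements follow because at a point of differentiability the superdifferential reduces to the singleton $\{D\breve{h}_L\}$, so each inclusion becomes an equality. The main point requiring care is the bookkeeping: the swap of the roles of $x$ and $y$, the sign flips in $\breve{L}_v$ and $\breve{L}_u$, and the change of variables in the exponent. I also need to check that the reversed pair $(\eta,u_\eta)$ satisfies exactly the hypotheses of Lemma~\ref{eq:equiv_positive} in the direction used; that is the step I would verify first.
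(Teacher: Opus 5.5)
Your proposal is correct and follows the paper's proof. Both reduce the statement to Proposition~\ref{D_h_L} applied to $\breve{L}$ via Lemma~\ref{eq:equiv_positive} and the time reversal $\eta(s)=\xi(t-s)$, $u_\eta(s)=-u_\xi(t-s)$, then transfer the superdifferentials using the swap of the $x$ and $y$ slots and the sign flip $D^+_u\breve{h}_L(t,x,y,u)=-D^+_uh_{\breve{L}}(t,y,x,-u)$; your explicit computation of $\breve{L}_v$ and $\breve{L}_u$ along the reversed curve, and your check that $\breve{L}$ satisfies (L1)--(L3), fill in details the paper leaves implicit.
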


\subsection{Quantitative semiconcavity and convexity estimates of $h_L$}

In this section we provide quantitative estimates for the semiconcavity and semiconvexity of the fundamental solution.

Fix $\lambda>0$, $T>0$ and $M>0$. Let $x,y\in\mathbb{R}^{n}$, $|u|\leqslant M$, $0<t\leqslant T$ and $|x-y|\leqslant\lambda t$. For any minimizer $\xi\in\Gamma^{t}_{x,y}$ of \eqref{eq:fundamental_solution}, Theorem~2.5 and Proposition~A.4 of \cite{CCJWY2020} imply the existence of a continuous function $F:\mathbb{R}\times[0,+\infty)\times[0,+\infty)\to[0,+\infty)$, nondecreasing in both $t$ and $r$ and superlinear in $r$, such that
\begin{equation}\label{eq:Lip_xi_dot_1}
    \sup_{s\in[0,t]}\bigl\{|u_{\xi}(s)|,|\dot{\xi}(s)|\bigr\}\leqslant F(M,T,\lambda).
\end{equation}
For the applications considered in this paper we assume $\lambda,M>0$ and $0<T\leqslant1$. Hence we may suppose there exists a constant $C>0$ satisfying
\begin{equation}\label{eq:Lip_xi_dot_2}
    \sup_{s\in[0,t]}\bigl\{|u_{\xi}(s)|,|\dot{\xi}(s)|\bigr\}\leqslant C.
\end{equation}
Moreover, from \eqref{eq:Lip_xi_dot_2} we obtain
\[
    |\xi(s)-x|\leqslant\int^{t}_{0}|\dot{\xi}(s)|\,ds\leqslant C,\qquad s\in[0,t].
\]
Consequently, there exists a compact set $\mathbf{K}\subset\mathbb{R}^{n}\times\mathbb{R}^{n}\times\mathbb{R}$ such that
\begin{equation}\label{eq:K1}
	\bigl\{(\xi(s),\dot{\xi}(s),u_{\xi}(s)):s\in[0,t]\bigr\}\subset\mathbf{K}.
\end{equation}

\subsubsection{Semiconcavity of the fundamental solution} \label{sec:semiconcave}

For any $z,w\in\R^n$ and $h,k\in\R$, we need to give an upper bound of
\begin{align*}
	I=h_L(t+h,x+z,y+w,u+k)+h_L(t-h,x-z,y-w,u-k)-2h_L(t,x,y,u)
\end{align*}
in terms of $|z|^2+|w|^2+|h|^2+|k|^2$. Now, let $\xi\in\Gamma^t_{x,y}$ be a minimizer of \eqref{eq:fundamental_solution} with $u_{\xi}$ uniquely determined by \eqref{eq:Caratheodory_ODE}. We define
\begin{align*}
	\xi^{\pm}(s)=\xi\left(\frac{st}{t\pm h}\right)\pm\frac s{t\pm h}\cdot w\pm\frac{t\pm h-s}{t\pm h}\cdot z, \quad s\in[0,t\pm h].
\end{align*}
Then $\xi^{\pm}\in\Gamma^{t\pm h}_{x\pm z,y\pm w}$. We denote by $u^{\pm}(\cdot)=u_{\xi^{\pm}}(\cdot,u\pm k)$ the solutions of \eqref{eq:Caratheodory_ODE} with respect to $\xi^{\pm}$ and initial conditions $u^{\pm}(0)=u\pm k$. Set
	\begin{align*}
		\eta^{\pm}(\tau)=\xi^{\pm}\left(\frac{t\pm h}t\cdot \tau\right),\quad v^{\pm}(\tau)=u^{\pm}\left(\frac{t\pm h}t\cdot \tau\right),\quad \tau\in[0,t].
	\end{align*}
	Then, we have that
	\begin{equation}\label{eq:rescale1}
			\eta^{\pm}(\tau)=\xi(\tau)\pm\frac{\tau w}t\pm \frac{(t-\tau)z}t,\quad \dot{\eta}^{\pm}(\tau)=\dot{\xi}(\tau)\pm\frac{w-z}{t},\quad \tau\in[0,t]
	\end{equation}
	and $v^{\pm}$ satisfy the ordinary differential equations
	\begin{equation}\label{eq:rescale2}
		\dot{v}^{\pm}(\tau)=\frac{t\pm h}tL\left(\eta^{\pm}(\tau),\frac{t}{t\pm h}\dot{\eta}^{\pm}(\tau),v^{\pm}(\tau)\right),\quad \tau\in[0,t],
	\end{equation}
	with initial conditions $v^{\pm}(0)=u\pm k$ respectively. 
	
\begin{Lem}\label{rescale_u}
	There exists a constant $C>0$ depending only on $\mathbf{K}$ such that
	\begin{align*}
		|v^+(\tau)-v^-(\tau)|\leqslant C\big(|k|+|h|+|z|+|w|\big),\quad \forall\tau\in[0,t].
	\end{align*}
\end{Lem}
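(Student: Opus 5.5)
We will prove the estimate by a Gronwall argument on the difference $v^+-v^-$, comparing the two right-hand sides of \eqref{eq:rescale2}. Set $\rho=|k|+|h|+|z|+|w|$ and write
\begin{align*}
	f^\pm(\tau,r):=\frac{t\pm h}{t}\,L\Bigl(\eta^\pm(\tau),\tfrac{t}{t\pm h}\dot\eta^\pm(\tau),r\Bigr),
\end{align*}
so that $\dot v^\pm=f^\pm(\tau,v^\pm)$ with $v^\pm(0)=u\pm k$ and $|v^+(0)-v^-(0)|=2|k|$.

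\textbf{Compact set.} By \eqref{eq:rescale1}, $|\eta^\pm(\tau)-\xi(\tau)|\le |z|+|w|$. Using $|h|<t/2$ we also get
\begin{align*}
	\Bigl|\tfrac{t}{t\pm h}\dot\eta^\pm-\dot\xi\Bigr|\le \frac{|h|}{t\pm h}|\dot\xi|+\frac{t}{t\pm h}\cdot\frac{|w|+|z|}{t}.
\end{align*}
Under the standing restrictions ($|z|,|w|\le\lambda t$, $|h|<t/2$, $|k|\le\delta$), the first term is bounded by $2C$ by \eqref{eq:Lip_xi_dot_2}, and the second by $4\lambda$. Hence the spatial and velocity arguments stay in a fixed enlargement $\mathbf K'$ of the projection of $\mathbf K$ from \eqref{eq:K1}.

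\textbf{Bounding $v^\pm$ a priori.} Since $|L_r|\le K$ by (L3), $|f^\pm(\tau,r)|\le 2\bigl(|L(\cdot,\cdot,0)|+K|r|\bigr)$. Gronwall on $[0,t]$ with $t\le1$ gives $|v^\pm|\le C'$. So all arguments of $L$ and its first derivatives lie in a compact set $\mathbf K''$ depending only on $\mathbf K$ (and $\lambda,\delta$), on which $L$, $L_x$, $L_v$ are bounded.

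\textbf{Difference of the vector fields.} We split
\begin{align*}
	|f^+(\tau,v^+)-f^-(\tau,v^-)|\le |f^+(\tau,v^+)-f^+(\tau,v^-)|+|f^+(\tau,v^-)-f^-(\tau,v^-)|.
\end{align*}
\begin{itemize}
	\item The first term is at most $\tfrac32 K|v^+-v^-|$ by (L3).
	\item For the second term, the prefactors differ by $2|h|/t$, which multiplies the bounded quantity $L$ on $\mathbf K''$.
	\item The $x$-arguments differ by at most $2(|z|+|w|)$, contributing $\sup|L_x|\cdot 2(|z|+|w|)$.
	\item The $v$-arguments differ by $\bigl|\tfrac{t}{t+h}-\tfrac{t}{t-h}\bigr||\dot\xi|+O\bigl((|z|+|w|)/t\bigr)=O\bigl((|h|+|z|+|w|)/t\bigr)$, contributing $\sup|L_v|$ times that.
\end{itemize}
Altogether,
\begin{align*}
	\frac{d}{d\tau}|v^+-v^-|\le C_1|v^+-v^-|+\frac{C_2}{t}\,\rho .
\end{align*}

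\textbf{Gronwall.} Integrating over $\tau\in[0,t]$, the forcing term contributes at most $C_2\rho$, because the $1/t$ is exactly compensated by the length $t$ of the interval. This yields
\begin{align*}
	|v^+(\tau)-v^-(\tau)|\le e^{C_1 t}\,(2|k|+C_2\rho)\le C\rho .
\end{align*}

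The main obstacle is the $1/t$ size of the velocity perturbation; it is harmless only because we integrate over an interval of length $t$, and the $O(1)$ terms use $t\le T\le1$. A secondary point is securing the a priori bound on $v^\pm$ so that the constants in $\sup|L_x|$ and $\sup|L_v|$ are uniform.
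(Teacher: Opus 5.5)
Your argument is correct and is essentially the paper's proof. The paper also writes $v^+-v^-$ as the initial gap $2k$, plus the integral of the $L$-difference, plus the $\frac{h}{t}$-prefactor term. It bounds the $x$-gap by $O(|z|+|w|)$ and the velocity gap by $O\bigl((|h|+|z|+|w|)/t\bigr)$ via \eqref{eq:eta+-}, uses (L3) for the $r$-dependence, and closes with Gronwall, the $1/t$ being absorbed by the interval length $t$. Your explicit a priori bound on $v^\pm$ and the enlarged compact set only make rigorous what the paper asserts directly from \eqref{eq:K1}.
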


\begin{Lem}\label{rescale_u2}
	There exists $C>0$ such that the following holds
	\begin{align*}
		|&\,v^+(\tau)+v^-(\tau)-2u_{\xi}(\tau)|\leqslant  \frac {C}t(|h|^2+|z|^2+|w|^2)+C|k|^2,\quad\quad \forall\tau\in[0,t],\\
		\int^t_0&\, \left\{L\left(\eta^+,\frac t{t+h}\dot{\eta}^+,v^+\right)+L\left(\eta^-,\frac t{t-h}\dot{\eta}^-,v^-\right)-2L(\xi,\dot{\xi},u_{\xi})\right\}\ d\tau\\
		&\,\leqslant \frac {C}t(|h|^2+|z|^2+|w|^2)+C|k|^2.
	\end{align*}
\end{Lem}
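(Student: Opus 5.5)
The plan is to reduce both estimates to a second-order Taylor expansion of $L$ around $(\xi,\dot\xi,u_\xi)$. All perturbed arguments stay in a slightly enlarged compact neighbourhood $\mathbf{K}'$ of $\mathbf{K}$ from \eqref{eq:K1}. I will work under the hypotheses $|z|,|w|\leqslant\lambda t$, $|h|<t/2$, $|k|\leqslant\delta$, so that $|w-z|/t$ and $t/(t\pm h)$ are bounded.

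\textbf{Velocity arguments.} I write
\[
\frac{t}{t\pm h}\dot\eta^\pm=\dot\xi+a^\pm,\qquad a^\pm=\Bigl(\frac{t}{t\pm h}-1\Bigr)\dot\xi\pm\frac{t}{t\pm h}\cdot\frac{w-z}{t}.
\]
Since $\frac{t}{t+h}+\frac{t}{t-h}-2=\frac{2h^2}{t^2-h^2}$, we get $a^++a^-=O\bigl(\frac{h^2}{t^2}+\frac{|h||w-z|}{t^2}\bigr)$ and $|a^\pm|=O\bigl(\frac{|h|+|w-z|}{t}\bigr)$. Similarly $\eta^\pm-\xi=\pm b$ with $|b|\leqslant|z|+|w|$, and the prefactors are $\frac{t\pm h}{t}=1\pm\frac ht$.

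\textbf{First inequality.} Set $\delta^\pm:=v^\pm-u_\xi$ and $\sigma:=\delta^++\delta^-$. Subtracting twice the equation for $u_\xi$ from the sum of the two equations \eqref{eq:rescale2} gives
\[
\dot\sigma=\Bigl(1+\frac ht\Bigr)L^+ +\Bigl(1-\frac ht\Bigr)L^- -2L,
\]
where $L^\pm$ denotes $L$ evaluated at the perturbed arguments. I expand $L^\pm$ to second order around $(\xi,\dot\xi,u_\xi)$. The first-order terms combine to
\[
L_x\cdot 0+L_v\cdot(a^++a^-)+L_u\,\sigma .
\]
The $\frac ht(L^+-L^-)$ term is bounded by $\frac{|h|}{t}\,C\bigl(|b|+|a^+-a^-|+|\delta^+-\delta^-|\bigr)$. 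The second-order remainders are bounded by $C\bigl(|b|^2+|a^\pm|^2+|\delta^\pm|^2\bigr)$. Lemma \ref{rescale_u} gives $|\delta^\pm|\leqslant C(|k|+|h|+|z|+|w|)$, obtained by Gronwall on each equation separately.

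This yields $|\dot\sigma|\leqslant C|\sigma|+R(\tau)$ with
\[
R\lesssim \frac{|h|^2+|z|^2+|w|^2}{t^2}+|k|^2,
\]
using $|h|/t\leqslant1/2$ and Young's inequality. Since $\sigma(0)=0$, Gronwall on $[0,t]$ with $t\leqslant1$ gives
\[
|\sigma|\leqslant C\int_0^tR\leqslant \frac Ct\bigl(|h|^2+|z|^2+|w|^2\bigr)+Ct|k|^2,
\]
which is the first inequality.

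\textbf{Second inequality.} The integrand is now $L^++L^--2L$ without the prefactors. I use the same expansion, with first-order part $L_v\cdot(a^++a^-)+L_u\sigma$ and second-order part bounded as above. Integrating over $[0,t]$ and inserting the bound on $\sigma$ from the first step gives the same right-hand side.

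\textbf{Main obstacle.} The hard part is the bookkeeping of the cross terms of order $\frac{|h|}{t}\cdot\frac{|w-z|}{t}$, and of the mixed terms $|a|\,|\delta|$ where $|\delta|$ contains $|k|$. These must be absorbed into $\frac1{t^2}(|h|^2+|z|^2+|w|^2)+|k|^2$ pointwise, using Young's inequality together with $t\leqslant1$. This absorption is what produces the factor $\frac1t$ after integrating over an interval of length $t$, while $|k|^2$ keeps a bounded coefficient.
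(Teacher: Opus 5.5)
Your proposal is correct and follows essentially the paper's route. Like the paper, you split $\dot\sigma$ into the symmetric part $L^++L^--2L$ (the paper's $I_3$) and the term $\frac ht(L^+-L^-)$ (the paper's $I_4$). You control the symmetric part by a second-order $C^2$ bound whose first-order part is $L_v\cdot(a^++a^-)+L_u\sigma$, with $|a^++a^-|\lesssim (h^2+|h||w-z|)/t^2$, and close with Gronwall from $\sigma(0)=0$.

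The only difference is cosmetic. Because you expand around $(\xi,\dot\xi,u_\xi)$, you need the individual bounds $|v^\pm-u_\xi|\leqslant C(|k|+|h|+|z|+|w|)$. These follow from the same Gronwall argument as in the proof of Lemma~\ref{rescale_u}, but not from its statement, which only controls $v^+-v^-$. The paper's midpoint inequality \eqref{semiconcave} needs only $|v^+-v^-|$.
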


Now we begin our semiconcavity estimate of the fundamental solution.

\begin{The}[Semiconcavity of fundamental solutions]\label{semiconcavity}
	For any $z$, $w\in\R^n$ and $h$, $k\in\R$ with $|z|,|w|\leqslant\lambda t$, $|k|\leqslant\delta$, $|h|<\frac t2$, there exist constants $C>0$ such that
	\begin{align*}
		h_L(&\,t+h,x+z,y+w,u+k)+h_L(t-h,x-z,y-w,u-k)-2h_L(t,x,y,u)\\
		\leqslant&\, \frac {C}t(|h|^2+|z|^2+|w|^2)+C|k|^2.
	\end{align*}
\end{The}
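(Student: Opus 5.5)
The estimate will follow from the competitor construction already set up before Lemma~\ref{rescale_u}, combined with Lemma~\ref{rescale_u2}. Fix a minimizer $\xi\in\Gamma^t_{x,y}$ for $h_L(t,x,y,u)$, with $u_\xi$ given by \eqref{eq:Caratheodory_ODE}. Take the curves $\xi^\pm\in\Gamma^{t\pm h}_{x\pm z,y\pm w}$ defined above, with $u^\pm$ solving \eqref{eq:Caratheodory_ODE} along $\xi^\pm$ from $u^\pm(0)=u\pm k$. These curves are admissible, so the definition \eqref{eq:fundamental_solution} gives
\begin{align*}
h_L(t\pm h,x\pm z,y\pm w,u\pm k)\leqslant u^\pm(t\pm h)-(u\pm k).
\end{align*}
Since $\xi$ is a minimizer, $h_L(t,x,y,u)=u_\xi(t)-u$. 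Adding the two inequalities, the $\pm k$ terms cancel, and we get
\begin{align*}
I\leqslant u^+(t+h)+u^-(t-h)-2u_\xi(t).
\end{align*}

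Next we pass to the rescaled functions. By definition $v^\pm(t)=u^\pm(t\pm h)$, so $I\leqslant v^+(t)+v^-(t)-2u_\xi(t)$. The first inequality of Lemma~\ref{rescale_u2}, evaluated at $\tau=t$, bounds this by $\frac{C}{t}(|h|^2+|z|^2+|w|^2)+C|k|^2$, which is the desired estimate.

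Alternatively, one can integrate \eqref{eq:rescale2}:
\begin{align*}
v^\pm(t)=u\pm k+\int_0^t\frac{t\pm h}{t}L\Big(\eta^\pm,\tfrac{t}{t\pm h}\dot\eta^\pm,v^\pm\Big)d\tau .
\end{align*}
This version has extra terms $\pm\frac{h}{t}\int_0^t L(\cdots)\,d\tau$. They combine into $\frac{h}{t}\int_0^t\big[L(\eta^+,\dots,v^+)-L(\eta^-,\dots,v^-)\big]$. By Lemma~\ref{rescale_u} and the Lipschitz bound of $L$ on a compact neighborhood of $\mathbf{K}$, this is at most $C\frac{|h|}{t}\,t\,(|k|+|h|+|z|+|w|+|h|/t+|w-z|/t)$. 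Young's inequality then absorbs it into $\frac{C}{t}(|h|^2+|z|^2+|w|^2)+C|k|^2$. The constraint $|h|<t/2$ keeps $t/(t\pm h)$ bounded. The remaining term is handled by the second inequality of Lemma~\ref{rescale_u2}.

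The only real point to check is that every perturbed trajectory stays in a fixed compact set, so that the constants of Lemmas~\ref{rescale_u} and \ref{rescale_u2} are uniform. Three facts give this:
\begin{itemize}
\item $|z|,|w|\leqslant\lambda t$ gives $|\dot\eta^\pm-\dot\xi|\leqslant 2\lambda$;
\item $|h|<t/2$ gives $t/(t\pm h)\in[2/3,2]$;
\item $|k|\leqslant\delta$ together with Lemma~\ref{rescale_u} (through a Gronwall argument based on (L3)) keeps $v^\pm$ bounded.
\end{itemize}
Hence all arguments of $L$ lie in a compact enlargement of $\mathbf{K}$ from \eqref{eq:K1}, which depends only on $\lambda,\delta,T$. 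This makes $C$ uniform and completes the proof.
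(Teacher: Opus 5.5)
Your proposal is correct and is essentially the paper's proof. The paper bounds $I$ by exactly $v^+(t)+v^-(t)-2u_\xi(t)$ and splits it into the $\frac{h}{t}$ cross term (controlled via \eqref{eq:I_4}) and the symmetric second difference (controlled by the second inequality of Lemma~\ref{rescale_u2}), which is your ``alternative'' route; your primary route reads off the same bound from the first inequality of Lemma~\ref{rescale_u2} at $\tau=t$. Your compactness check covers a point the paper glosses over, since it appeals to \eqref{eq:K1}, which only applies to minimizers. The direct Gronwall argument from (L3) is enough there by itself, whereas invoking Lemma~\ref{rescale_u} would be circular because its proof already assumes $v^\pm$ is bounded.
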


\subsubsection{Some regularity lemmata}

Fix $\lambda>0$. Let $x,y_1,y_2\in\R^n$, $u_i\in\R$ and $0<t\leqslant 1$ such that $|y_i-x|\leqslant\lambda t,\,|u_i|\leqslant \delta$ for $i=1,2$. Similar to the previous subsection, there exists a compact subset $\mathbf{K}_{\lambda,\delta}\subset\R^n\times\R^n\times\R$ such that, if $\xi_i$ is a minimal curve for $h_L(t,x,y_i,u_i)$ with $u_{\xi_i}$ determined by \eqref{eq:Caratheodory_ODE} with initial condition $u_{\xi_i}(0)=u_i$, $i=1,2$. then
\begin{align*}
	\mbox{\rm co}\,\{(\xi_i(s),\dot{\xi}_i(s),u_{\xi_i}(s)), s\in[0,t],i=1,2\}\subset\mathbf{K}_{\lambda,\delta}.
\end{align*}
Let $p_i=L_v(\xi_i,\dot{\xi}_i,u_{\xi_i})$ the dual arc of $\xi_i$, $i=1,2$.

\begin{Lem}[Main Regularity Lemma]\label{regularity_main}
	There exists $t_{\lambda,\delta}>0$ such that for all $t\in(0,t_{\lambda,\delta}]$, we have
	\begin{align}
		\|\xi_1-\xi_2\|^2\leqslant&\,\frac Ct|y_1-y_2|^2+C|u_1-u_2|^2,\label{hu:xi}\\
		\int^t_0|p_2-p_1|^2d\tau\leqslant&\,\frac Ct|y_1-y_2|^2+C|u_1-u_2|^2,\nonumber\\
		\|u_{\xi_1}-u_{\xi_2}\|^2\leqslant&\, C(|y_1-y_2|^2+|u_1-u_2|^2),\nonumber\\
		\int^t_0|\dot{\xi}_2-\dot{\xi}_1|^2d\tau\leqslant&\,\frac Ct|y_1-y_2|^2+C|u_1-u_2|^2.\label{hu:dotxi}
	\end{align}
	where the positive constants $t_{\lambda,\delta}$ and $C$ depend on $\mathbf{K}_{\lambda,\delta}$, and $\|\cdot\|$ stands for $C_0$-norm.
\end{Lem}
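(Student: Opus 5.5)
Both $(\xi_i,p_i,u_{\xi_i})$ solve the Lie system \eqref{eq:Lie} with $\xi_1(0)=\xi_2(0)=x$, $u_{\xi_i}(0)=u_i$, $\xi_i(t)=y_i$, and all arcs lie in the compact convex set $\mathbf{K}_{\lambda,\delta}$. The Lipschitz constants of $H$, $H_p$, $H_x$, $H_u$ are therefore uniform. Throughout, set $\Delta\xi=\xi_2-\xi_1$, $\Delta p=p_2-p_1$, $\Delta u=u_{\xi_2}-u_{\xi_1}$. The plan is a short-time energy estimate obtained by pairing $\Delta\xi$ with $\Delta p$, using the strict convexity of $H$ in $p$ (uniform on the compact set) to produce a coercive term. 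The time $t_{\lambda,\delta}$ will be chosen so small that the perturbation terms are absorbed.

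\textbf{Step 1 (the $u$-component).} Subtract the third equation of \eqref{eq:Lie}, i.e.\ $\dot u_{\xi_i}=L(\xi_i,\dot\xi_i,u_{\xi_i})$, and apply Gronwall on $[0,t]$ with $t\le1$. This gives
\[
\|\Delta u\|\le C\Bigl(|u_1-u_2|+\int_0^t(|\Delta\xi|+|\Delta p|)\,d\tau\Bigr)\le C|u_1-u_2|+Ct\|\Delta\xi\|+C\sqrt t\,\|\Delta p\|_{L^2}.
\]

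\textbf{Step 2 (the basic identity).} Compute
\[
\frac{d}{d\tau}\langle \Delta p,\Delta\xi\rangle=\langle\Delta\dot p,\Delta\xi\rangle+\langle \Delta p,H_p(\xi_2,p_2,u_2)-H_p(\xi_1,p_1,u_1)\rangle .
\]
Strict convexity gives $\langle\Delta p,H_p(\xi_2,p_2,u_2)-H_p(\xi_2,p_1,u_2)\rangle\ge c|\Delta p|^2$, and the remaining parts are bounded by $C|\Delta p|(|\Delta\xi|+|\Delta u|)$. The term $\langle\Delta\dot p,\Delta\xi\rangle$ is bounded by $C|\Delta\xi|(|\Delta\xi|+|\Delta p|+|\Delta u|)$, using the bound on $p$ from \eqref{eq:Lip_xi_dot_2}. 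Integrate over $[0,t]$, using $\Delta\xi(0)=0$ and $\Delta\xi(t)=y_2-y_1$:
\[
c\int_0^t|\Delta p|^2\le |\Delta p(t)||y_1-y_2|+C\int_0^t\bigl(|\Delta\xi|^2+|\Delta\xi||\Delta p|+|\Delta p||\Delta u|\bigr).
\]

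\textbf{Step 3 (closing).} Since $\Delta\xi(0)=0$, we have $|\Delta\xi(\tau)|\le\int_0^\tau|\Delta\dot\xi|\le C\int_0^\tau(|\Delta p|+|\Delta\xi|+|\Delta u|)$. Hence
\[
\|\Delta\xi\|\le C\sqrt t\,\|\Delta p\|_{L^2}+Ct\|\Delta u\|,\qquad \int_0^t|\Delta\xi|^2\le t\|\Delta\xi\|^2\le Ct^2\|\Delta p\|_{L^2}^2+\dots
\]
Substituting Step 1 shows that every $\Delta\xi$ and $\Delta u$ term on the right of Step 2 is either $O(t)\|\Delta p\|_{L^2}^2$ or $C|u_1-u_2|^2$ after Young's inequality. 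Choosing $t_{\lambda,\delta}$ small absorbs these into the left side.

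The main obstacle is the boundary term $|\Delta p(t)||y_1-y_2|$, because $\Delta p(t)$ is not controlled by an $L^2$ norm. I would handle it by an endpoint estimate. The $p$-equation of \eqref{eq:Lie} gives $|\Delta p(t)-\Delta p(\tau)|\le C\int_0^t(|\Delta\xi|+|\Delta p|+|\Delta u|)\le C\sqrt t\,\|\Delta p\|_{L^2}+\dots$. Averaging over $\tau\in[0,t]$ then yields
\[
|\Delta p(t)|\le t^{-1/2}\|\Delta p\|_{L^2}+C\sqrt t\,\|\Delta p\|_{L^2}+\dots
\]
Thus $|\Delta p(t)||y_1-y_2|\le \varepsilon\|\Delta p\|_{L^2}^2+\frac{C}{\varepsilon t}|y_1-y_2|^2+C|u_1-u_2|^2$. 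Combining gives
\[
\int_0^t|\Delta p|^2\le \frac Ct|y_1-y_2|^2+C|u_1-u_2|^2.
\]

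\textbf{Step 4 (the remaining estimates).} The other three estimates follow from this $L^2$ bound on $\Delta p$.
\begin{itemize}
\item \eqref{hu:xi}: from Step 3, $\|\Delta\xi\|^2\le Ct\|\Delta p\|^2_{L^2}+\dots\le C|y_1-y_2|^2+Ct|u_1-u_2|^2$, which is stronger than \eqref{hu:xi}.
\item The $u$-estimate: Step 1 gives $\|\Delta u\|^2\le C(|y_1-y_2|^2+|u_1-u_2|^2)$.
\item \eqref{hu:dotxi}: $|\Delta\dot\xi|\le C(|\Delta\xi|+|\Delta p|+|\Delta u|)$ pointwise, then integrate.
\end{itemize}
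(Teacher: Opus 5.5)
Your argument is correct, and its skeleton is the paper's. It uses the identity for $\frac{d}{d\tau}\langle\Delta p,\Delta\xi\rangle$ with coercivity from $H_{pp}$, Gronwall for the $u$-component, and absorption for small $t$. As in the paper, the coercivity needs $H_{pp}\geqslant\nu>0$ on the compact set, which is slightly more than the strict convexity in (H1).

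The execution differs in two useful ways.

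First, you estimate $\Delta\xi$ by integrating forward from $\Delta\xi(0)=0$ and using Cauchy--Schwarz, so $\|\Delta\xi\|\leqslant C\sqrt t\,\|\Delta p\|_{L^2}+Ct|u_1-u_2|$. The paper instead integrates $\frac{d}{ds}|\Delta\xi|^2$ backward from $|y_2-y_1|^2$ and bounds $|\Delta p||\Delta\xi|$ by $\frac12(|\Delta p|^2+|\Delta\xi|^2)$ in \eqref{eq:regu1}. That couples \eqref{eq:basic1} with \eqref{eq:p_2} and only yields $\|\Delta\xi\|^2\leqslant\frac Ct|y_1-y_2|^2+C|u_1-u_2|^2$ in \eqref{eq:xi_2}. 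Your ordering closes the $L^2$ bound for $\Delta p$ by itself. It then gives the sharper $\|\Delta\xi\|^2\leqslant C|y_1-y_2|^2+Ct|u_1-u_2|^2$, which is optimal in $|y_1-y_2|$ since $\|\Delta\xi\|\geqslant|y_1-y_2|$.

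Second, the paper passes from the inequality with $\langle p_2(t)-p_1(t),y_2-y_1\rangle$ on the left to the term $\frac{C_{12}}t|y_2-y_1|^2$ in \eqref{eq:p_2}. It never explains how the endpoint value $\Delta p(t)$ is controlled. The a priori bound $|\Delta p(t)|\leqslant C$ would only give a term linear in $|y_1-y_2|$. Your averaging estimate supplies exactly the missing control: after inserting your bounds on $\Delta\xi,\Delta u$, it reads $|\Delta p(t)|\leqslant t^{-1/2}\|\Delta p\|_{L^2}+C\sqrt t\,\|\Delta p\|_{L^2}+Ct|u_1-u_2|$. Combined with Young's inequality with $\varepsilon$, it justifies that step, so on this point your proof is more complete than the paper's.
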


Similar to Lemma \ref{regularity_main}, we also need a regularity lemma on a small perturbation on $t$-variable.

Fix $\lambda>0$. Given $x,y,z\in\R^n$, $u,k,h\in\R$ and $0<t\leqslant 1$, let $t_{\pm}=t\pm h,\,y_{\pm}=y\pm z,\,u_{\pm}=u\pm k$, $\xi_{\pm}$ be a minimal curve for $h_L(t_{\pm},x,y_{\pm},u_{\pm})$ and $u_{\xi{\pm}}$ be determined by \eqref{eq:Caratheodory_ODE} with initial condition $u_{\xi_{\pm}}(0)=u\pm k$. We also denote by $p_{\pm}=L_v(\xi_{\pm},\dot{\xi}_{\pm},u_{\xi_{\pm}})$ the dual arc of $\xi_{\pm}$.  We also suppose that $|y_{\pm}-x|\leqslant\lambda t$, $|z|\leqslant\lambda t$, $|h|<\frac{t}{2}$ and $|u_{\pm}|\leqslant \delta$. Therefore, as explained previously, there exists a compact subset $\mathbf{K}_{\lambda,\delta}\subset\R^n\times\R^n\times\R$ such that
\begin{align*}
	\mbox{co}\,\{(\xi_{\pm}(s),\dot{\xi}_{\pm}(s),u_{\xi_{\pm}}(s)), s\in[0,t]\}\subset\mathbf{K}_{\lambda,\delta}.
\end{align*}

Denote 
$$\eta_\pm(\tau)=\xi_\pm\left(\frac{t\pm h}{t}\tau\right),\quad v_\pm(\tau)=u_\pm\left(\frac{t\pm h}{t}\tau\right),\quad \tau\in[0,t].$$
Then $v_\pm$ satisfy the ordinary differential equations
\begin{equation}\label{hu:eq}
\dot{v}_{\pm}(\tau)=\frac{t\pm h}tL\left(\eta_{\pm}(\tau),\frac{t}{t\pm h}\dot{\eta}_{\pm}(\tau),v_{\pm}(\tau)\right),\quad \tau\in[0,t],
\end{equation}
	with initial conditions $v_{\pm}(0)=u_\pm $ respectively.
	Define
\begin{align*}
	\xi(\tau)=\frac{\eta_+(\tau)+\eta_-(\tau)}2,\quad \tau\in[0,t],
\end{align*}
then $\xi\in\Gamma^t_{x,y}$.

\begin{Lem}\label{lemma}
There exists $t_{\lambda,\delta}>0$ such that for all $t\in(0,t_{\lambda,\delta}]$
\begin{align}
&\|\eta_+-\eta_-\|^2\leqslant\frac{C}{t}(|h|^2+|z|^2)+C|k|^2,\label{hu:eta}\\
&\int_0^t|\dot{\eta}_+-\dot{\eta}_-|^2d\tau\leqslant\frac{C}{t}(|h|^2+|z|^2)+C|k|^2,\label{hu:doteta}\\
&\|v_+-v_-\|^2\leqslant C(|h|^2+|z|^2+|k|^2),\label{hu:v}\\
&\|v_++v_--2u_\xi\|\leqslant\frac{C}{t}(|h|^2+|z|^2)+C|k|^2,\label{hu:vv}
\end{align}
where the constant $C>0$ depends on the compact subset $\mathbf{K}_{\lambda,\delta}$ and $u_{\xi}$ be determined by \eqref{eq:Caratheodory_ODE} with initial condition $u_{\xi}(0)=u$.
\end{Lem}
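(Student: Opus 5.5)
The plan is to treat $\eta_\pm$ as solutions of a rescaled Herglotz/Lie system on the common interval $[0,t]$ and to run the same Gronwall-plus-convexity argument that gives Lemma~\ref{regularity_main}, now with the extra parameter $h$. By \eqref{hu:eq}, $\eta_\pm$ minimizes the Herglotz problem on $[0,t]$ for the rescaled Lagrangian
\begin{align*}
L^{\pm}(x,v,r)=\frac{t\pm h}{t}\,L\Bigl(x,\frac{t}{t\pm h}v,r\Bigr),
\end{align*}
with endpoints $x$ and $y_\pm$ and initial value $u_\pm$. Since $|h|<t/2$, the factor $\frac{t}{t\pm h}$ lies in $[\frac23,2]$. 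Hence $L^\pm$ stays uniformly $C^2$ on the compact $\mathbf{K}_{\lambda,\delta}$, and $L^\pm_{vv}$ is uniformly positive definite there. Moreover $|L^+-L^-|$, $|L^+_x-L^-_x|$, $|L^+_r-L^-_r|$ and $|L^+_v-L^-_v|$ are all bounded by $C|h|/t$ on $\mathbf{K}_{\lambda,\delta}$. The dual arcs $q_\pm=L^\pm_v(\eta_\pm,\dot\eta_\pm,v_\pm)=p_\pm(\frac{t\pm h}{t}\tau)$ then satisfy the Lie system \eqref{eq:Lie} for $H^\pm$, with an $O(|h|/t)$ discrepancy between the two vector fields.

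\textbf{Step 1 (estimate \eqref{hu:v}).} Subtract the two ODEs \eqref{hu:eq}. Using (L3) and Gronwall, this gives $|v_+-v_-|(\tau)\le C\bigl(|k|+\int_0^\tau|\eta_+-\eta_-|+|\dot\eta_+-\dot\eta_-|+|h|/t\bigr)$, exactly as in Lemma~\ref{rescale_u}. Cauchy--Schwarz turns the $\dot\eta$ term into $\sqrt t\,(\int|\dot\eta_+-\dot\eta_-|^2)^{1/2}$. So \eqref{hu:v} follows once \eqref{hu:doteta} is known, and the term $\tau|h|/t\le |h|$ is harmless.

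\textbf{Step 2 (the core, estimates \eqref{hu:eta} and \eqref{hu:doteta}).} I will test the Lie equations against $\eta_+-\eta_-$. Compute
\begin{align*}
\frac{d}{d\tau}\langle q_+-q_-,\eta_+-\eta_-\rangle=\langle \dot q_+-\dot q_-,\eta_+-\eta_-\rangle+\langle q_+-q_-,\dot\eta_+-\dot\eta_-\rangle ,
\end{align*}
and integrate over $[0,t]$. The boundary term is $\langle q_+(t)-q_-(t),2z\rangle$, since $\eta_\pm(0)=x$ and $\eta_\pm(t)=y\pm z$; it is bounded by $C|z|$ times $|q_+(t)-q_-(t)|$. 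Uniform convexity gives $\langle q_+-q_-,\dot\eta_+-\dot\eta_-\rangle\ge c|\dot\eta_+-\dot\eta_-|^2-C(|\eta_+-\eta_-|+|v_+-v_-|+|h|/t)|\dot\eta_+-\dot\eta_-|$. The $\dot q$ term is at most $C(|\eta_+-\eta_-|+|q_+-q_-|+|v_+-v_-|+|h|/t)|\eta_+-\eta_-|$. For the remaining pieces I use Poincaré in the form $\|\eta_+-\eta_- - \text{affine}\|\le\sqrt t(\int|\dot\eta_+-\dot\eta_-|^2)^{1/2}$, which gives $\|\eta_+-\eta_-\|\le 2|z|+\sqrt t\,\|\dot\eta_+-\dot\eta_-\|_{L^2}$, together with Step 1 for $v_+-v_-$. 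Absorbing, for $t\le t_{\lambda,\delta}$ small, all the $O(t)$ coefficients multiplying $\int|\dot\eta_+-\dot\eta_-|^2$ into the left side then yields
\begin{align*}
\int_0^t|\dot\eta_+-\dot\eta_-|^2\le \frac Ct(|h|^2+|z|^2)+C|k|^2.
\end{align*}

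\textbf{Main obstacle.} The delicate point is the boundary term $|q_+(t)-q_-(t)|$, and, more generally, the control of $q_+-q_-$ in sup norm. I would handle it as in Lemma~\ref{regularity_main}. The $q$-equation in \eqref{eq:Lie} bounds the variation of $q_+-q_-$ on $[0,t]$ by $Ct$ times the sup of the differences plus $C|h|$. Therefore $|q_+(t)-q_-(t)|$ is controlled by its average, and the average is controlled by $\frac1t\int|\dot\eta_+-\dot\eta_-|+C(\|\eta_+-\eta_-\|+\|v_+-v_-\|+|h|/t)$, hence by $t^{-1/2}\|\dot\eta_+-\dot\eta_-\|_{L^2}$ plus lower-order terms. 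After Young's inequality, the boundary term contributes $\epsilon\int|\dot\eta_+-\dot\eta_-|^2+C_\epsilon|z|^2/t$, which is the right scaling. Estimate \eqref{hu:eta} then follows from the Poincaré bound above, since $|z|^2\le|z|^2/t$ for $t\le1$. I then return to Step 1 to close \eqref{hu:v}.

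\textbf{Step 3 (estimate \eqref{hu:vv}).} Write $w=v_++v_--2u_\xi$ with $\xi=(\eta_++\eta_-)/2$, so that $w(0)=0$. Expand $\frac{t\pm h}{t}L(\eta_\pm,\frac{t}{t\pm h}\dot\eta_\pm,v_\pm)$ to second order around $(\xi,\dot\xi,\cdot)$ with the $\pm$ symmetric increments $\pm\frac12(\eta_+-\eta_-)$, $\pm\frac12(\dot\eta_+-\dot\eta_-)$ and $\pm h$, as in Lemma~\ref{rescale_u2}. The first-order terms cancel. What remains is a linear term $L_r w$, together with quadratic remainders bounded by $C\bigl(|\eta_+-\eta_-|^2+|\dot\eta_+-\dot\eta_-|^2+|v_+-v_-|^2+h^2/t^2\bigr)$. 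Gronwall on $[0,t]$ then gives $\|w\|\le C\int_0^t(\cdots)$. By Steps 1 and 2 this integral is bounded by $\frac Ct(|h|^2+|z|^2)+C|k|^2$, which is \eqref{hu:vv}.
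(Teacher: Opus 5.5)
Your argument is correct. For the core estimates \eqref{hu:eta}--\eqref{hu:doteta}, however, it takes a different route from the paper.

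\textbf{The paper's route.} The paper redoes no energy estimate. Taking $h>0$, it compares $\xi_+$ and $\xi_-$ on the common interval $[0,t-h]$. There $\xi_+|_{[0,t-h]}$ is again a minimizer, for $h_L(t-h,x,\xi_+(t-h),u_+)$. The terminal points differ by $|\xi_+(t-h)-\xi_-(t-h)|\leqslant 2(C|h|+|z|)$ and the initial values by $2|k|$, so \eqref{hu:xi} and \eqref{hu:dotxi} of Lemma~\ref{regularity_main} apply directly. Passing from $\xi_\pm$ to $\eta_\pm$ then costs only $C|h|$ in sup norm, and $C|h|/t$ pointwise for velocities using a bound on $\ddot\xi_+$.

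\textbf{Your route.} Your Steps 1 and 3 essentially coincide with the paper's Gronwall argument for \eqref{hu:v} and its symmetric second-difference argument for \eqref{hu:vv}. Your Step 2 is different: you absorb $h$ into the data, with $L^\pm=\mu_\pm L(x,v/\mu_\pm,r)$, $H^\pm=\mu_\pm H$ and $\mu_\pm=(t\pm h)/t$. You then rerun the identity for $\langle q_+-q_-,\eta_+-\eta_-\rangle$ on $[0,t]$ with an $O(|h|/t)$ forcing, which after Young's inequality gives exactly the $|h|^2/t$ scaling.

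\textbf{Comparison.} The paper's route is shorter because it uses Lemma~\ref{regularity_main} as a black box. In exchange it implicitly needs several facts:
\begin{itemize}
\item restrictions of minimizers are minimizers (monotonicity of the Carath\'eodory equation in its initial datum);
\item the intermediate endpoint $\xi_+(t-h)$ satisfies the cone condition, possibly with a larger $\lambda$;
\item a bound on $\ddot\xi_+$;
\item a case split on the sign of $h$.
\end{itemize}
Your route is self-contained and avoids all of these. It also handles explicitly the boundary term $\langle q_+(t)-q_-(t),2z\rangle$, via your averaging bound $|q_+(t)-q_-(t)|\lesssim t^{-1/2}\|\dot\eta_+-\dot\eta_-\|_{L^2}+\|\eta_+-\eta_-\|+\|v_+-v_-\|+|h|/t$. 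The paper's proof of Lemma~\ref{regularity_main} does not spell out this step. The cost of your route is repeating the full convexity and Gronwall bookkeeping with the extra perturbation terms.

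\textbf{A small simplification.} Since $\eta_+(0)=\eta_-(0)=x$, you have directly $\|\eta_+-\eta_-\|^2\leqslant t\int_0^t|\dot\eta_+-\dot\eta_-|^2\,d\tau$. This makes the affine Poincar\'e bound unnecessary and gives \eqref{hu:eta} in a sharper form.
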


\subsubsection{Convexity estimate for short time}

\begin{The}\label{hu:uniform covex}
There exists $t_{\lambda,\delta}>0$ such that for any $x\in\mathbb{R}^n$, the function $(t,y,u)\mapsto h_L(t,x,y,u)$ is semiconvex on the cone
\begin{align*}
	S_{\lambda,\delta}=\{(t,y,u): t\in(0,t_{\lambda,\delta}], |y-x|<\lambda t,|u|<\delta\}.
\end{align*}
More precisely, there exists $C_1>0$ depending on the compact subset $\mathbf{K}_{\lambda,\delta}$ defined previously such that for $|h|<\frac t2$, $|z|<\lambda t$ and $|k|<\delta$,
\begin{equation}\label{convex}
	\begin{split}
		h_L(&\,t+h,x,y+z,u+k)+h_L(t-h,x,y-z,u-k)-2h_L(t,x,y,u)\\
		\geqslant&\,-\frac{C_1}{t}(|h|^2+|z|^2)-C_1|k|^2.
	\end{split}
\end{equation}

In particular, there exist $t^\prime_{\lambda,\delta}$ and $C_2>0$ also depending on the compact subset $\mathbf{K}_{\lambda,\delta}$, such that for all $t\in (0,t^\prime_{\lambda,\delta}]$ the function $h_L(t,x,\cdot,u)$ is uniformly convex on $B(x,\lambda t)$, i.e., for all $|y-x|<\lambda t$ and $|z|<\lambda t$ we have that
\begin{equation}\label{uniform convex}
	h_L(t,x,y+z,u)+h_L(t,x,y-z,u)-2h_L(t,x,y,u)
		\geqslant\,\frac{C_2}{t}|z|^2.
\end{equation}
\end{The}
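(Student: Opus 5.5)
We prove the semiconvexity estimate \eqref{convex} by the usual "midpoint curve" comparison, now applied in the opposite direction to the semiconcavity argument: minimizers are taken at the two perturbed points and a competitor is built at the centre. Let $\xi_\pm$ be minimal curves for $h_L(t\pm h,x,y\pm z,u\pm k)$, with rescalings $\eta_\pm$, $v_\pm$ as in \eqref{hu:eq}. Set $\xi=(\eta_++\eta_-)/2\in\Gamma^t_{x,y}$. Since $\xi$ is only a competitor,
\[
h_L(t,x,y,u)\leqslant u_\xi(t)-u .
\]
Moreover $h_L(t\pm h,x,y\pm z,u\pm k)=v_\pm(t)-u_\pm$, because the time rescaling does not change the endpoint value. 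Therefore
\[
h_L(t+h,\dots)+h_L(t-h,\dots)-2h_L(t,x,y,u)\geqslant v_+(t)+v_-(t)-2u_\xi(t),
\]
where we used $u_++u_-=2u$. Estimate \eqref{hu:vv} of Lemma~\ref{lemma} now gives \eqref{convex} immediately for $t\leqslant t_{\lambda,\delta}$, with $C_1$ equal to the constant of that lemma. So the first part reduces entirely to Lemma~\ref{lemma}, which we may assume.

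For the uniform convexity \eqref{uniform convex}, set $h=k=0$. Here we need a sign, not just a bound, so we must sharpen the computation of $v_++v_--2u_\xi$ instead of using \eqref{hu:vv} as a black box. Write $w=v_++v_--2u_\xi$. With $h=0$ the three functions satisfy $\dot v_\pm=L(\eta_\pm,\dot\eta_\pm,v_\pm)$ and $\dot u_\xi=L(\xi,\dot\xi,u_\xi)$, with $w(0)=0$. Taylor-expand $L$ at $(\xi,\dot\xi,u_\xi)$. The first-order terms cancel except for $L_u\,w$, because $\eta_++\eta_-=2\xi$ and $\dot\eta_++\dot\eta_-=2\dot\xi$ exactly, while $v_\pm-u_\xi$ is only antisymmetric up to $w/2$. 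The second-order terms equal
\[
\tfrac12\,\Big\langle D^2L\,\Delta_\pm,\Delta_\pm\Big\rangle \quad\text{summed over }\pm,\qquad \Delta_\pm=\pm\tfrac12\,(\eta_+-\eta_-,\ \dot\eta_+-\dot\eta_-,\ v_+-v_-),
\]
up to a correction of size $O(|w|\cdot|\Delta|)$. This yields a linear ODE
\[
\dot w=L_u w+Q(\tau),
\]
where
\[
Q\geqslant c_0|\dot\eta_+-\dot\eta_-|^2-C\big(|\eta_+-\eta_-|^2+|v_+-v_-|^2\big)-C|w|\,|\Delta| .
\]
The constant $c_0>0$ comes from (L1) together with $C^2$ strict convexity on the compact set $\mathbf{K}_{\lambda,\delta}$. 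Strictly, (L1) only gives strict convexity; I would use $L_{vv}>0$ on compacts, which is standard for Tonelli Lagrangians. The mixed $x$–$v$ terms are absorbed by Cauchy–Schwarz. Solving the ODE with the integrating factor $e^{\int L_u}$, whose value lies between $e^{-Kt}$ and $e^{Kt}$, gives
\[
w(t)\geqslant c\int_0^t|\dot\eta_+-\dot\eta_-|^2\,d\tau-C t\big(\|\eta_+-\eta_-\|^2+\|v_+-v_-\|^2\big)-Ct\,\|w\|\,\|\Delta\|.
\]

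The lower bound for the main term comes from the boundary data. Since $\eta_+(t)-\eta_-(t)=2z$ and $\eta_+(0)=\eta_-(0)=x$, Cauchy–Schwarz gives
\[
\int_0^t|\dot\eta_+-\dot\eta_-|^2\,d\tau\geqslant\frac{4|z|^2}{t}.
\]
The error terms are bounded using Lemma~\ref{lemma} with $h=k=0$:
\[
\|\eta_+-\eta_-\|^2\leqslant \frac{C|z|^2}{t},\qquad \|v_+-v_-\|^2\leqslant C|z|^2,\qquad \|w\|\leqslant \frac{C|z|^2}{t}.
\]
With these, the negative terms are of order $C|z|^2+Ct|z|^2+C|z|^2\cdot\frac{|z|}{\sqrt t}$. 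Because $|z|<\lambda t$, each of these is $O(|z|^2)$, which is $O(t)\cdot|z|^2/t$. Choosing $t'_{\lambda,\delta}$ small then gives $w(t)\geqslant C_2|z|^2/t$. This proves \eqref{uniform convex}.

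The main obstacle is the bookkeeping in the second-order Taylor expansion, specifically making sure the coefficient of $|z|^2/t$ on the good side is not eaten by the $u$-coupling. The $u$-coupling contributes the term $L_u w$ and the mixed $v$–$u$ second derivatives. The delicate point is controlling $\|w\|$ itself, since it appears inside the error. For this I would rely on \eqref{hu:vv} rather than re-deriving it, and then check that every remaining error carries an extra factor $t$ or $|z|/\sqrt t\leqslant\lambda\sqrt t$ relative to the main term.
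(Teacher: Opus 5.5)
Your proof is correct. For \eqref{uniform convex} it is essentially the paper's argument; for \eqref{convex} you take a genuine shortcut.

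The paper bounds $I$ below by $I_1+I_2+I_3$: a time-rescaling term, the term $I_2$ that carries the convexity in $v$, and a velocity-rescaling term $I_3$. It estimates each piece separately and closes with Lemma~\ref{lemma}, including \eqref{hu:vv}, only at the end. You observe that the midpoint competitor gives exactly $I\geqslant v_+(t)+v_-(t)-2u_\xi(t)$. Since \eqref{hu:vv} already controls this quantity in absolute value, \eqref{convex} follows at once. This is valid, and it shows the paper's finer splitting is really needed only for Corollary~\ref{cor:convexity_t}, where the positive term $I_7$ extracted from $I_3$ matters.

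For \eqref{uniform convex} the ingredients are the same in both arguments:
\begin{itemize}
\item $L_{vv}\geqslant c_0$ on $\mathbf{K}_{\lambda,\delta}$;
\item the lower bound $\int_0^t|\dot\eta_+-\dot\eta_-|^2d\tau\geqslant 4|z|^2/t$ coming from the boundary data;
\item Lemma~\ref{lemma}, which makes every error term $O(|z|^2)=O(t)\cdot|z|^2/t$.
\end{itemize}
The only difference is how the $u$-coupling is handled. The paper splits at $(\xi,\dot\xi,\frac{v_++v_-}{2})$ into $I_4+I_5$ and bounds $I_5$ by $C\int_0^t|w|\,d\tau$ via \eqref{hu:vv}. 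You instead absorb it through the integrating factor for $\dot w=L_uw+Q$. Combining the two ideas, i.e.\ expanding about that midpoint rather than about $(\xi,\dot\xi,u_\xi)$, removes your cross term $|w|\,|\Delta|$ entirely. Then only \eqref{hu:eta} and \eqref{hu:v} are needed.

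Two minor points. First, the claim $\|\Delta\|\lesssim|z|/\sqrt t$ is not justified. $\Delta$ contains $\dot\eta_+-\dot\eta_-$, and Lemma~\ref{lemma} controls that only in $L^2$, not in sup norm. However, $\|\Delta\|\leqslant C$ on $\mathbf{K}_{\lambda,\delta}$ already gives $t\|w\|\,\|\Delta\|\leqslant C|z|^2$, so your conclusion is unaffected. The same applies to the $O(|w|^2)$ term that your expansion about $(\xi,\dot\xi,u_\xi)$ also produces. Second, your caveat about (L1) is justified. The paper's constant $\nu$ tacitly assumes $L_{vv}$ is positive definite on compacts, and that assumption is also needed for the paper's assertion that $H$ is $C^2$.
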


\begin{Cor}\label{cor:convexity_t}
Fix $\lambda>0$. Let $x,y\in\R^n$, $u,h\in\R$ and $0<t\leqslant 1$ such that $|y-x|\leqslant\lambda t$, $|u|\leqslant \delta$ and $|h|<\frac{t}{2}$. Suppose that $\xi_\pm$ is a minimal curve for $h_L(t_\pm,x,y,u)$ and
\begin{align*}
	\inf_{s\in [0,t+h]}|\dot{\xi}_+|^2+\inf_{s\in [0,t-h]}|\dot{\xi}_-|^2\geqslant C_1,
\end{align*}
where $C_1>0$ is a constant depending on the compact subset $\mathbf{K}_{\lambda,\delta}$ defined previously. Then for any $x\in\mathbb{R}^n$, the function $t\rightarrow h_L(t,x,y,u)$ is uniformly convex. Thus, there exists $C_2$ such that
\begin{align*}
	h_L(t+h,x,y,u)+h_L(t-h,x,y,u)-2h_L(t,x,y,u)\geqslant\,\frac{C_2}{t}|h|^2.
\end{align*}
\end{Cor}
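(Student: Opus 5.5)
We prove Corollary~\ref{cor:convexity_t} by running the argument of Theorem~\ref{hu:uniform covex} with $z=0$, $k=0$. The extra assumption on $|\dot\xi_\pm|$ plays the role that strict convexity of $L$ in $v$ played for the $y$-direction: it turns the time-rescaling into a genuinely positive quadratic term in $h$.

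\textbf{Setup.} Let $\xi_\pm$ be minimal for $h_L(t\pm h,x,y,u)$. Rescale them to $[0,t]$ as in Lemma~\ref{lemma}:
\[
\eta_\pm(\tau)=\xi_\pm\bigl(\tfrac{t\pm h}{t}\tau\bigr),\qquad v_\pm(0)=u,
\]
with $v_\pm$ solving \eqref{hu:eq}. Put $\xi=(\eta_++\eta_-)/2\in\Gamma^t_{x,y}$, which is an admissible competitor for $h_L(t,x,y,u)$. Hence
\[
h_L(t+h)+h_L(t-h)-2h_L(t)\geqslant v_+(t)+v_-(t)-2u-2\bigl(u_\xi(t)-u\bigr)=v_+(t)+v_-(t)-2u_\xi(t),
\]
and it suffices to bound $w:=v_++v_--2u_\xi$ from below at $\tau=t$.

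\textbf{The linear equation for $w$.} Set $\sigma_\pm=t/(t\pm h)$. Then
\[
\dot w=(\sigma_+)^{-1}L\bigl(\eta_+,\sigma_+\dot\eta_+,v_+\bigr)+(\sigma_-)^{-1}L\bigl(\eta_-,\sigma_-\dot\eta_-,v_-\bigr)-2L(\xi,\dot\xi,u_\xi).
\]
Expand every term around $(\xi,\dot\xi,u_\xi)$ to second order. By Lemma~\ref{lemma} with $z=k=0$, the quantities $\|v_\pm-u_\xi\|$ and $\int_0^t|\dot\eta_+-\dot\eta_-|^2$ are controlled by $|h|^2/t$ and $|h|^2$. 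The first-order terms in $v$ give $L_u\,w$. The first-order terms in $(\eta,\dot\eta)$ cancel by symmetry up to $O(|h|^2/t)$ errors. This yields
\[
\dot w = L_u\,w + Q + R,
\]
where $R$ is the remainder, bounded in $L^1$ by $C\bigl(|h|^2/t+|h|^3/t^2\bigr)$, and $Q$ is the quadratic form coming from the convexity of $s\mapsto s^{-1}L(x,sv,u)$ at $s=1$. The second derivative of this map is
\[
\langle L_{vv}(x,v,u)\,v,v\rangle ,
\]
and $\sigma_\pm-1\approx\mp h/t$. So the key term is
\[
Q\approx \frac{h^2}{t^2}\,\langle L_{vv}\dot\eta_\pm,\dot\eta_\pm\rangle\geqslant c_{\mathbf K}\,\frac{h^2}{t^2}\,|\dot\eta_\pm|^2 ,
\]
where $c_{\mathbf K}>0$ is the lower bound of $L_{vv}$ on $\mathbf K_{\lambda,\delta}$ given by (L1) and compactness. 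The cross terms between $\dot\eta_\pm-\dot\xi$ and $h/t$ are absorbed by Cauchy–Schwarz together with \eqref{hu:doteta}.

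\textbf{Integrating.} Since $w(0)=0$, Gronwall with the integrating factor $e^{-\int L_u}$ gives
\[
w(t)\geqslant e^{-Kt}\int_0^t (Q-|R|)\,d\tau .
\]
By the hypothesis $\inf|\dot\xi_+|^2+\inf|\dot\xi_-|^2\geqslant C_1$, and because $\dot\eta_\pm=(\sigma_\pm)^{-1}\dot\xi_\pm(\cdot)$ has comparable size, we get
\[
\int_0^t Q\geqslant c_{\mathbf K}C_1\,\frac{h^2}{t}.
\]
The result follows if $C_1$ is chosen large relative to the constants in the remainder. This is exactly why $C_1$ is allowed to depend on $\mathbf K_{\lambda,\delta}$.

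\textbf{Main obstacle.} The hardest step is showing that all error terms are genuinely $o(h^2/t)$ or dominated by $C_1h^2/t$. Two contributions are delicate: the mixed second-order terms in $(x,v)$, and the first-order terms $L_x\cdot(\eta_\pm-\xi)$ and $L_v\cdot(\dot\eta_\pm-\dot\xi)$. These cancel only after using $\eta_++\eta_--2\xi=0$, and their quadratic residues must be controlled via \eqref{hu:eta}–\eqref{hu:vv}. If the constants from Lemma~\ref{lemma} do not close the argument directly, the fallback is to shrink $t$ so that the $Ct\,h^2/t^2$-type terms from the $x$- and $u$-dependence become small compared with the coercive $C_1h^2/t$ term.
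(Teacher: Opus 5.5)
Your proposal is correct in outline and is essentially the paper's argument. The common ingredients are the midpoint competitor $(\eta_++\eta_-)/2$, Lemma~\ref{lemma} with $z=k=0$ to make every error term $O(|h|^2/t)$, and the coercive term $\nu\frac{h^2}{t^2}|\dot\eta_\pm|^2$ coming from the time rescaling, which wins only because $C_1$ is large relative to the error constants (the paper's condition $C_1>9C_7/(4\nu)$); your linear ODE for $w$ with an integrating factor merely replaces the paper's split into $I_1,I_2,I_3$ and its appeal to \eqref{hu:vv}.

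Two small corrections: $\langle L_{vv}v,v\rangle$ is the second derivative at $\mu=1$ of the perspective map $\mu\mapsto\mu L(x,v/\mu,u)$, whereas for $s\mapsto s^{-1}L(x,sv,u)$ the extra terms $2(L-L_v\cdot v)$ cancel only against the first-order term, since $\sigma_++\sigma_--2=2h^2/(t^2-h^2)\neq0$. Also, shrinking $t$ cannot rescue the cross term $\frac{h}{t}\langle L_{vv}\dot\xi,\dot\eta_\pm-\dot\xi\rangle$, which is of the same order $|h|^2/t$ as the coercive term, so it is the largeness of $C_1$, not the smallness of $t$, that closes the argument.
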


It is not hard to show that the combination of the previous results leads to the following theorem.

\begin{The}\label{regularity}
	Fix $\lambda>0$, $x\in\R^n$, then there exist constant $t_{\lambda}$ and $C_{\lambda}>0$ depending on $\lambda,\,x$ and $u$ such that
	\begin{enumerate}[\rm (a)]
	\item The map $y\mapsto h_L(t,x,y,u)$ is both locally semiconcave and uniformly convex on $B(x,\lambda t)$ for $0<t<t_{\lambda}$, and the constant of convexity is $C_{\lambda}/t$. A similar result also holds for the map $x\mapsto h_L(t,x,y,u)$.
	\item The map $(t,y,u)\mapsto h_L(t,x,y,u)$ is locally $C^{1,1}$ on the set
	$$S=\{(t,y,u)\in \mathbb{R}\times \mathbb{R}^n\times\mathbb{R}:0<t<t_\lambda,\,|y-x|<\lambda t,\,|u|<\delta\}.$$
	Moreover, for all $(t,y,u)\in S$,
	\begin{align*}
	                 D_th_L(t,x,y,u)=&\,-e^{\int^t_0L_u(\xi,u_{\xi},\dot{\xi})\ ds}E(t)=-H(\xi(t),u_{\xi}(t),p(t)),\\
		D_yh_L(t,x,y,u)=&\,L_v(\xi(t),u_{\xi}(t),\dot{\xi}(t)),\\
		D_xh_L(t,x,y,u)=&\,-e^{\int^t_0L_u(\xi,u_{\xi},\dot{\xi})d\tau}\cdot L_v(\xi(0),u_{\xi}(0),\dot{\xi}(0)),\\
		D_uh_L(t,x,y,u)=&\,e^{\int^t_0L_u(\xi,u_{\xi},\dot{\xi})\ d\tau}-1,
	\end{align*}
	where $\xi\in\Gamma^t_{x,y}$ is the unique minimizer for $h_L(t,x,y,u)$ with $u_{\xi}$ determined by \eqref{eq:Caratheodory_ODE}.
	\end{enumerate}
\end{The}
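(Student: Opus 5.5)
The proof will combine the semiconcavity estimate of Theorem \ref{semiconcavity} with the short-time semiconvexity and uniform convexity estimates of Theorem \ref{hu:uniform covex}. The regularity then comes from the elementary fact that a function which is both semiconcave and semiconvex on an open convex set is $C^{1,1}$ there.

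\textbf{Part (a).} Fix $\lambda$, $x$, $u$ and take $t_\lambda\le t'_{\lambda,\delta}$. Setting $h=0$, $z=0$, $k=0$ in Theorem \ref{semiconcavity} gives
\begin{align*}
h_L(t,x,y+w,u)+h_L(t,x,y-w,u)-2h_L(t,x,y,u)\le \frac Ct|w|^2.
\end{align*}
For continuous functions this midpoint inequality is equivalent to semiconcavity with constant of order $C/t$ on $B(x,\lambda t)$. Uniform convexity with constant $C_2/t$ is exactly \eqref{uniform convex}, so we may take $C_\lambda=\min\{C_2,\ldots\}$.

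For the variable $x$, I will run the same argument with the roles of the endpoints exchanged, using the time-reversal of Lemma \ref{eq:equiv_positive}. Alternatively, one can repeat the proof of Theorem \ref{hu:uniform covex} with the left endpoint perturbed, using the $x$-analogue of Lemma \ref{regularity_main}, which is symmetric.

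\textbf{Part (b).} On the cone $S$, Theorem \ref{semiconcavity} (taking $z=0$) shows that $(t,y,u)\mapsto h_L$ is locally semiconcave. Note that $t$ is bounded below on compact subsets of $S$, so the constant $C/t$ is locally uniform. Theorem \ref{hu:uniform covex}, via \eqref{convex}, shows it is locally semiconvex on $S_{\lambda,\delta}$.

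A continuous function that is semiconcave and semiconvex with constants $C$ on a ball has $D^+=D^-\neq\emptyset$ at every point, hence is differentiable. Moreover, its gradient is Lipschitz with constant controlled by $C$. To see this, I will use the standard inequality $|f(X+Z)-f(X)-\langle Df(X),Z\rangle|\le C|Z|^2$ and compare the expansions at two nearby points.

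This gives local $C^{1,1}$ regularity on $S$. In particular $h_L$ is differentiable in each of $t$, $y$, $u$, and also in $x$ once part (a) is used.

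\textbf{Derivative formulas.} Differentiability turns the superdifferential inclusions into equalities:
\begin{itemize}
\item Proposition \ref{D_h_L}(a) gives $D_yh_L$.
\item Proposition \ref{D_h_L}(b) gives $D_xh_L$.
\item Proposition \ref{D_h_L}(c) gives $D_uh_L$.
\item Proposition \ref{D_t} gives $D_th_L$. Its identification with $-H(\xi(t),p(t),u_\xi(t))$ follows from the definition of $E$ together with Lemma \ref{energy} and the Legendre identity $L_v\cdot v-L=H$.
\end{itemize}

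\textbf{Uniqueness of the minimizer.} If $\xi_1,\xi_2$ are two minimizers for the same data, then $D_yh_L$ being single-valued forces $p_1(t)=p_2(t)$. Together with $\xi_i(t)=y$ and $u_{\xi_i}(t)=u+h_L$, the two curves share the same terminal data for the Lie system \eqref{eq:Lie}. Backward uniqueness for this $C^1$ ODE then gives $\xi_1=\xi_2$. Alternatively, Lemma \ref{regularity_main} with $y_1=y_2$ and $u_1=u_2$ gives $\|\xi_1-\xi_2\|=0$ directly.

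The only genuinely delicate point is the local uniformity of the constants and domains. The estimates are stated for increments $|h|<t/2$ and $|z|<\lambda t$ around a base point, so I must check that small balls around any point of $S$ stay inside a cone $S_{\lambda',\delta'}$ with slightly larger $\lambda'$. For this I will shrink $t_\lambda$ to be at most $t_{\lambda',\delta'}$, with for example $\lambda'=2\lambda$.
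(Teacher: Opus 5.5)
Your plan follows the paper's route. The paper gives no separate argument for Theorem \ref{regularity}: it only says the theorem follows by combining the semiconcavity estimate of Theorem \ref{semiconcavity}, the semiconvexity and uniform convexity estimates of Theorem \ref{hu:uniform covex}, and the superdifferential inclusions of Propositions \ref{D_h_L} and \ref{D_t}. Your handling of the local uniformity of constants and domains, and your proof that the minimizer is unique, fill in points the paper leaves implicit. The uniqueness argument is sound: two minimizers share the terminal data $(y,p(t),u+h_L)$ for the Lie system, and backward uniqueness then forces them to coincide.

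One of your two suggestions for the $x$-variable does not work. Time reversal does not turn the $x$-dependence of $h_L$ into the $y$-dependence of another negative-type fundamental solution. Applying Lemma \ref{eq:equiv_positive} with $\breve{L}$ in place of $L$ gives $h_L(t,x,y,u)=\breve{h}_{\breve{L}}(t,y,x,-u)$. So $x$ becomes the terminal point of a \emph{positive}-type problem, whose Carath\'eodory equation is pinned by a terminal condition at that same moving point. Lemma \ref{regularity_main}, Lemma \ref{lemma} and Theorem \ref{hu:uniform covex} are proved only for the negative type, with the initial condition imposed at the fixed endpoint. Nothing already established therefore applies after the reversal.

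You must use your second route instead. Redo those three results with the left endpoint perturbed: now $\xi_1(t)=\xi_2(t)$, so the bound on $|\xi_2-\xi_1|^2$ comes from integrating forward from $s=0$, and the boundary term of $\frac{d}{ds}\langle p_2-p_1,\xi_2-\xi_1\rangle$ sits at $s=0$. This is what the paper's phrase ``a similar result also holds'' tacitly claims.

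If you only need the formula for $D_xh_L$ in (b), there is a cheaper path that avoids any new convexity estimate:
\begin{enumerate}
\item $x\mapsto h_L(t,x,y,u)$ is semiconcave by Theorem \ref{semiconcavity}.
\item Its reachable gradients are limits of $p_x(t)$ along minimizers from nearby initial points.
\item Those minimizers converge, up to subsequences in $C^1$, to minimizers for $(t,x,y,u)$.
\item Uniqueness of the minimizer then makes the set of reachable gradients, and hence $D^+_xh_L$, a singleton.
\end{enumerate}
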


\section{Proofs of the statements in Section \ref{sec:statement_regularity}}

\subsection{First order regularity}

\begin{proof}[Proof of Proposition \ref{D_h_L}]
The essential idea for the first order estimates is to analyze the associated variational equation. Suppose $x,y\in\R^n$, $t>0$ and $u\in\R$. Let $\xi\in\Gamma^t_{x,y}$ be a minimizer of \eqref{eq:fundamental_solution} and let be $u_{\xi}(s)$ the unique solution of \eqref{eq:Caratheodory_ODE}. 

\medskip
	
\noindent (a) For any $\lambda>0$ and $0\not=\theta\in\R^n$, we define
\begin{align*}
	\xi_{\lambda}(s)=\xi(s)+\frac st\lambda\theta,\quad s\in[0,t],
\end{align*}
and let $u_{\xi_{\lambda}}$ be the unique solution of \eqref{eq:Caratheodory_ODE} with respect to  $\xi_{\lambda}$. If we denote 
\begin{gather*}
	f(s)=\frac{\partial}{\partial\lambda}u_{\xi_{\lambda}}(s)\bigg\vert_{\lambda=0},\quad h(s)=L_u(\xi(s),\dot{\xi}(s),u_{\xi}(s)),\\
	g(s)=\left\langle L_x(\xi(s),\dot{\xi}(s),u_{\xi}(s)),\frac st\theta\right\rangle+\left\langle L_v(\xi(s),\dot{\xi}(s),u_{\xi}(s)),\frac 1t\theta\right\rangle,
\end{gather*}
then we have that $f(0)=0$ since $u_{\xi_{\lambda}}(0)$ is constant with respect to $\lambda$. It follows that
\begin{equation}\label{eq:variational_eqn}
	\dot{f}(s)=g(s)+h(s)f(s),\quad  s\in[0,t].
\end{equation}
By solving the variational equations above, we have
\begin{equation}\label{eq:sol_variational_eqn}
	f(s)=e^{\int^s_0h(r)\ dr}\cdot\int^s_0e^{-\int^r_0h(\tau)\ d\tau}\cdot g(r)\ dr,\quad  s\in[0,t].
\end{equation}
To prove \eqref{eq:p_in_superdiff}, it suffices to show that
\begin{align*}
	\limsup_{\lambda\to0^+}\frac{h_L(t,x,y+\lambda\theta,u)-h_L(t,x,y,u)-\langle p_y(t),\lambda\theta\rangle}{\lambda}\leqslant0,
\end{align*}
where $p_y(t)$ is determined by \eqref{eq:p_in_superdiff}. Indeed, we have that 
\begin{align}
	&\,\limsup_{\lambda\to0^+}\frac{h_L(t,x,y+\lambda\theta,u)-h_L(t,x,y,u)}{\lambda}\notag\\
		\leqslant&\,\lim_{\lambda\to0^+}\frac 1{\lambda}\int^t_0\{L(\xi_{\lambda},\dot{\xi}_{\lambda},u_{\xi_{\lambda}})-L(\xi,\dot{\xi},u_{\xi})\}\ ds=\int^t_0g(s)+h(s)f(s)\ ds\notag\\
		=&\,e^{\int^t_0h(\tau)d\tau}\cdot\int^t_0e^{-\int^s_0h(\tau)d\tau}g(s)\ ds\notag\\
		=&\,e^{\int^t_0L_u(\xi(\tau),\dot{\xi}(\tau),u_{\xi}(\tau))d\tau}\cdot\int^t_0\frac d{ds}\left\langle e^{-\int^s_0L_u(\xi(\tau),\dot{\xi}(\tau),u_{\xi}(\tau))d\tau}L_v(\xi(s),\dot{\xi}(s),u_{\xi}(s)),\frac st\theta\right\rangle\ ds\label{eq:add1}\\
		=&\,\langle L_v(\xi(t),\dot{\xi}(t),u_{\xi}(t)),\theta\rangle\notag.
\end{align}
This leads to \eqref{eq:p_in_superdiff} and the last assertion of (a) follows directly.

\medskip

\noindent (b) For any $\lambda>0$ and $0\not=\theta\in\R^n$, we define
\begin{align*}
	\xi_{\lambda}(s)=\xi(s)+\frac {t-s}t\lambda\theta,\quad s\in[0,t],
\end{align*}
and let $u_{\xi_{\lambda}}$ be the unique solution of \eqref{eq:Caratheodory_ODE} with respect to $\xi_{\lambda}$. Similar to the proof of (a), by setting
\begin{gather*}
	f(s)=\frac{\partial}{\partial\lambda}u_{\xi_{\lambda}}(s)\bigg\vert_{\lambda=0},\quad h(s)=L_u(\xi(s),\dot{\xi}(s),u_{\xi}(s)),\\
	g(s)=\left\langle L_x(\xi(s),\dot{\xi}(s),u_{\xi}(s)),\frac{t-s}t\theta\right\rangle-\left\langle L_v(\xi(s),\dot{\xi}(s),u_{\xi}(s)),\frac 1t\theta\right\rangle,
\end{gather*}
we conclude that $f$ satisfies the variational equation \eqref{eq:variational_eqn} with $g,h$ defined above, with $f(0)=0$, and it admits a unique solution in the form of \eqref{eq:sol_variational_eqn}. Invoking Herglotz equation \eqref{eq:Herglotz} and \eqref{eq:add1}, we obtain
\begin{equation}\label{eq:E1}
	\begin{split}
		&\,\frac d{ds}\left\langle e^{-\int^s_0L_u(\xi,\dot{\xi},u_{\xi})d\tau}L_v(\xi(s),\dot{\xi}(s),u_{\xi}(s)),\frac st\theta\right\rangle\\
	=&\,e^{-\int^s_0L_u(\xi,\dot{\xi},u_{\xi})d\tau}\left\{\left\langle L_v(\xi(s),\dot{\xi}(s),u_{\xi}(s)),\frac 1t\theta\right\rangle+\left\langle \frac d{ds}L_v-L_uL_v,\frac st\theta\right\rangle\right\}
	\end{split}
\end{equation}
and
\begin{equation}\label{eq:E2}
	\begin{split}
		&\,e^{\int^t_0L_u(\xi,\dot{\xi},u_{\xi})d\tau}\int^t_0e^{-\int^s_0L_u(\xi,\dot{\xi},u_{\xi})d\tau}\langle L_x,\theta\rangle\ ds\\
	=&\,e^{\int^t_0L_u(\xi,\dot{\xi},u_{\xi})d\tau}\int^t_0e^{-\int^s_0L_u(\xi,\dot{\xi},u_{\xi})d\tau}\left\langle \frac d{ds}L_v-L_uL_v,\theta\right\rangle\ ds\\
	=&\,e^{\int^t_0L_u(\xi,\dot{\xi},u_{\xi})d\tau}\cdot\left\{e^{-\int^s_0L_u(\xi,\dot{\xi},u_{\xi})d\tau}\langle L_v(\xi(s),\dot{\xi}(s),u_{\xi}(s)),\theta\rangle\right\}\bigg\vert_0^t\\
	=&\,\langle L_v(\xi(t),\dot{\xi}(t),u_{\xi}(t)),\theta\rangle-e^{\int^t_0L_u(\xi,\dot{\xi},u_{\xi})d\tau}\cdot\langle L_v(\xi(0),\dot{\xi}(0),u_{\xi}(0)),\theta\rangle.
	\end{split}
\end{equation}
By combining \eqref{eq:E1} and \eqref{eq:E2}, we conclude
\begin{align*}
	&\,\limsup_{\lambda\to0^+}\frac{h_L(t,x+\lambda\theta,y,u)-h_L(t,x,y,u)}{\lambda}\\
		\leqslant&\,\lim_{\lambda\to0^+}\frac 1{\lambda}\int^t_0\{L(\xi_{\lambda},\dot{\xi}_{\lambda},u_{\xi_{\lambda}})-L(\xi,\dot{\xi},u_{\xi})\}\ ds=\int^t_0g(s)+h(s)f(s)\ ds\\
		=&\,e^{\int^t_0L_u(\xi,\dot{\xi},u_{\xi})d\tau}\cdot\int^t_0e^{-\int^s_0L_u(\xi,\dot{\xi},u_{\xi})d\tau}g(s)\ ds\\
		=&\,-e^{\int^t_0L_u(\xi,\dot{\xi},u_{\xi})d\tau}\cdot\langle L_v(\xi(0),\dot{\xi}(0),u_{\xi}(0)),\theta\rangle.
\end{align*}
This completes the proof of \eqref{eq:x_in_superdiff} and the last assertion of (b) follows.

\medskip

\noindent (c) For any $k\in\R$, let $u_{\xi,k}$ be the unique solution of \eqref{eq:Caratheodory_ODE} with respect to $\xi$ with initial condition $u_{\xi,k}(0)=u+k$. Set
\begin{align*}
	f(s)=\frac{\partial}{\partial k}u_{\xi,k}(s)\bigg\vert_{k=0},\quad h(s)=L_u(\xi(s),\dot{\xi}(s),u_{\xi}(s)),
\end{align*}
then we have that $f(0)=1$. Thus, by solving variational equation
\begin{align*}
	\dot{f}(s)=h(s)f(s),\quad s\in[0,t],
\end{align*}
we have that
\begin{align*}
	f(s)=e^{\int^s_0L_u(\xi(\tau),\dot{\xi}(\tau),u_{\xi}(\tau))\ d\tau},\quad s\in[0,t].
\end{align*}
It follows
\begin{align*}
	&\,\limsup_{k\to0}\frac{h(t,x,y,u+k)-(t,x,y,u)}{k}\\
	\leqslant&\,\lim_{k\to0}\frac 1{k}\int^t_0\{L(\xi,\dot{\xi},u_{\xi,k})-L(\xi,\dot{\xi},u_{\xi})\}\ ds=\int^t_0h(s)f(s)\ ds\\
	=&\,e^{\int^t_0L_u(\xi,\dot{\xi},u_{\xi})\ d\tau}-1,
\end{align*}
and this leads to \eqref{eq:u_in_superdiff}.
\end{proof}

\begin{proof}[Proof of Lemma \ref{energy}]
We work on $\R^n$. Let $(\xi,u_{\xi})$ be an extremal for \eqref{eq:Herglotz} on $[0,t]$, by direct calculation, we have that
	\begin{align*}
		&\,\frac d{ds}\left\{e^{-\int^s_0L_u(\xi,\dot{\xi},u_{\xi})\ d\tau}\cdot(L(\xi(s),\dot{\xi}(s),u_{\xi}(s))- L_v(\xi(s),\dot{\xi}(s),u_{\xi}(s))\cdot\dot{\xi}(s)\right\}\\
		=&\,e^{-\int^s_0L_u(\xi,\dot{\xi},u_{\xi})\ d\tau}\cdot[L_x\cdot\dot{\xi}(s)+L_u\cdot\dot{u}_{\xi}(s)+L_v\cdot\ddot{\xi}(s)-\frac d{ds}L_v\cdot\dot{\xi}(s)-L_v\cdot\ddot{\xi}(s)]\\
		&\,-L_u\cdot e^{-\int^s_0L_u(\xi,\dot{\xi},u_{\xi})\ d\tau}\cdot[L-L_v\cdot\dot{\xi}(s)]=0.
	\end{align*}
	It follows that $E$ is constant on $[0,t]$. The case for $E^*$ is similar.
\end{proof}

\begin{proof}[Proof of Proposition \ref{D_t}]
For any $h\in\R$ such that $t+h>0$, we define
	\begin{align*}
		\xi_h(\tau)=\xi\left(\frac t{t+h}\tau\right),\quad \tau\in[0,t+h],
	\end{align*}
	and $u_{\xi_h}:[0,t+h]\to\R$ is defined by 
	\begin{align*}
		\dot{u}_{\xi_h}(\tau)=L(\xi_h(\tau),\dot{\xi}_h(\tau),u_{\xi_h}(\tau)),\quad \tau\in[0,t+h]
	\end{align*}
	with initial condition $u_{\xi_h}(0)=u$. Let $v_{\xi_h}(s)=u_{\xi_h}(\frac{t+h}ts)$. then we have that
	\begin{align*}
		\int^{t+h}_0L(\xi_h(\tau),\dot{\xi}_h(\tau),u_{\xi_h}(\tau))\ d\tau=\frac{t+h}t\int^t_0L\left(\xi(s),\frac{t}{t+h}\dot{\xi}(s),v_{\xi_h}(s)\right)\ ds,
	\end{align*}
	and $v_{\xi_h}$ satisfies
	\begin{align*}
		\dot{v}_{\xi_h}(s)=\frac{t+h}tL\left(\xi(s),\frac{t}{t+h}\dot{\xi}(s),v_{\xi_h}(s)\right),\quad \tau\in[0,t],
	\end{align*}
	with $v_{\xi_h}(0)=u$. Then, the variational equation becomes
	\begin{align*}
		\dot{f}(s)=g(s)+h(s)f(s),\quad s\in[0,t],
	\end{align*}
	where 
	\begin{align*}
	    f(s);=&\,\frac d{dh}v_{\xi_h}(s)|_{h=0},\\
		g(s):=&\,\frac 1t(L(\xi(s),\dot{\xi}(s),u_{\xi}(s))-\langle L_v(\xi(s),\dot{\xi}(s),u_{\xi}(s)),\dot{\xi}(s)\rangle,\\
		h(s):=&\,L_u(\xi(s),\dot{\xi}(s),u_{\xi}(s)).
	\end{align*}
	Solving the linear ordinary equation above with $f(0)=0$, we have that
	\begin{align*}
		f(s)=e^{\int^s_0h(r)\ dr}\cdot\int^s_0e^{-\int^r_0h(\tau)\ d\tau}\cdot g(r)\ dr.
	\end{align*}
	Therefore, in view of Lemma \ref{energy}, we have that
	\begin{align*}
		&\/\limsup_{h\to0}\frac{h_L(t+h,x,y,u)-h_L(t,x,y,u)}{h}\\
		\leqslant&\,\lim_{h\to0}\frac 1h\int^t_0\left\{L\left(\xi(s),\frac{t}{t+h}\dot{\xi}(s),v_{\xi_h}(s)\right)-L(\xi(s),\dot{\xi}(s),u_{\xi}(s))\right\} ds\\
		&\,+\lim_{h\to0}\frac 1t\int^t_0L\left(\xi(s),\frac{t}{t+h}\dot{\xi}(s),v_{\xi_h}(s)\right)\ ds\\
		=&\,\int^t_0L_u(\xi,\dot{\xi},u_{\xi})f-\frac 1tL_v(\xi,\dot{\xi},u_{\xi})\cdot\dot{\xi}+\frac 1tL(\xi,\dot{\xi},u_{\xi})\ ds=\int^t_0hf+g\ ds\\
	=&\, \int^t_0\dot{f}\ ds=f(t)-f(0)=e^{\int^t_0h(r)\ dr}\cdot\left\{\int^t_0e^{-\int^s_0h(r)\ dr}\cdot g(s)\ ds\right\}\\
	=&\,-e^{\int^t_0L_u(\xi,\dot{\xi},u_{\xi})\ ds}\cdot\frac 1t\int^t_0 E(s)\ ds=-e^{\int^t_0L_u(\xi,\dot{\xi},u_{\xi})\ ds}E(t).
	\end{align*}
	Invoking Lemma \ref{energy}, we have that $-e^{\int^t_0L_u(\xi,\dot{\xi},u_{\xi})\ ds}E(t)\in D^+_th_L(t,x,y,u)$. 
	
	If $h_L(t,x,y,u)$ is differentiable at $t$, then
	\begin{align*}
		D_th_L(t,x,y,u)=-e^{\int^t_0L_u(\xi,\dot{\xi},u_{\xi})\ ds}E(t)=-H(\xi(t),p(t),u_{\xi}(t)),
	\end{align*}
	which completes the proof of \eqref{diff_of_fund_sol_t}.
\end{proof}

\begin{proof}[Proof of Lemma \ref{eq:equiv_positive}]
	Fix $x,y\in\R^n$, $t>0$. Let $\xi\in\Gamma^t_{y,x}$ and $u_{\xi}$ is determined by \eqref{eq:cara_1}, i.e.,
	\begin{align*}
		h_{\breve{L}}(t,y,x,-u)=\int^t_0\breve{L}(\xi(s),\dot{\xi}(s),u_{\xi}(s))\ ds.
	\end{align*}
	Notice that $\dot{\eta}(s)=-\dot{\xi}(t-s)$, $\dot{u}_{\eta}(s)=\dot{u}_{\xi}(t-s)$ and $u_{\eta}$ satisfies \eqref{eq:caratheodory_L_2} with respect to $\eta$. Therefore
	\begin{align*}
		\breve{h}_L(t,x,y,u)\leqslant&\,\int^t_0L(\eta(s),\dot{\eta}(s),u_{\eta}(s))\ ds\\
		=&\,\int^t_0L(\eta(t-\tau),\dot{\eta}(t-\tau),u_{\eta}(t-\tau))\ d\tau\\
		=&\,\int^t_0\breve{L}(\xi(\tau),\dot{\xi}(\tau),u_{\xi}(\tau))\ d\tau=h_{\breve{L}}(t,y,x,-u).
	\end{align*}
	The opposite inequality can be obtained similarly.
\end{proof}

\begin{proof}[Proof of Proposition \ref{D_breave_h_L}]
Fix $x,y\in M$, $t>0$ and $u\in\R$. In view of Lemma \ref{eq:equiv_positive}, we have that $\breve{h}_L(t,x,y,u)=h_{\breve{L}}(t,y,x,-u)$ for all $x,y\in\R^n$, $t>0$ and $u\in\R$. Moreover, If $\xi\in\Gamma^t_{x,y}$ is a minimal curve for $\breve{h}_L(t,x,y,u)$ and $u_{\xi}$ is determined by \eqref{eq:caratheodory_L_2} with $u_{\xi}(t)=u$, then the curve $\eta\in\Gamma^t_{y,x}$ and $u_{\eta}:[0,t]\to\R$ defined by $\eta(s):=\xi(t-s)$, $s\in[0,t]$, is a minimal curve for $h_{\breve{L}}(t,y,x,-u)$, and $u_{\eta}$ defined by $u_{\eta}(s):=-u_{\xi}(t-s)$, $s\in[0,t]$ satisfies \eqref{eq:Caratheodory_ODE} associated to $\breve{L}$ with $u_{\eta}(0)=-u$. Notice that we have that
\begin{align*}
	e^{\int^t_0\breve{L}_u(\eta,\dot{\eta},u_{\eta})ds}=&\,e^{-\int^t_0L_u(\xi,\dot{\xi},u_{\xi})ds}\\
	\breve{L}_v(\eta(0),\dot{\eta}(0),u_{\eta}(0))=&\,-L_v(\xi(t),\dot{\xi}(t),u_{\xi}(t))\\
	\breve{L}_v(\eta(t),\dot{\eta}(t),u_{\eta}(t))=&\,-L_v(\xi(0),\dot{\xi}(0),u_{\xi}(0)),
\end{align*}
and the relations
\begin{align*}
	D^+_y\breve{h}_L(t,x,y,u)=&\,D^+_xh_{\breve{L}}(t,y,x,-u),\\
	D^+_x\breve{h}_L(t,x,y,u)=&\,D^+_yh_{\breve{L}}(t,y,x,-u),\\
	D^+_u\breve{h}_L(t,x,y,u)=&\,-D^+_uh_{\breve{L}}(t,y,x,-u).
\end{align*}
Then \eqref{eq_relation_differential_breve_h_L} follows and the last assertion is immediate.
\end{proof}

\subsection{Semiconcavity and convexity estimates}

\begin{proof}[Proof of Lemma \ref{rescale_u}]
By the definitions of $\eta^{\pm}$ and $v^{\pm}$ together with \eqref{eq:rescale1} and \eqref{eq:rescale2}, we have that
	\begin{align*}
		v^+(\tau)-&v^-(\tau)\\
		=&\,(v^+(0)-v^-(0))+\int^{\tau}_0L\left(\eta^+,\frac{t}{t+h}\dot{\eta}^+,v^+\right)-L\left(\eta^-,\frac{t}{t-h}\dot{\eta}^-,v^-\right)ds\\
		&\,+\frac{h}t\int^{\tau}_0L\left(\eta^+,\frac{t}{t+h}\dot{\eta}^+,v^+\right)+L\left(\eta^-,\frac{t}{t-h}\dot{\eta}^-,v^-\right)ds\\
		\leqslant&\,2k+I_1+I_2,
	\end{align*}
	where 
	\begin{align*}
	I_1&=\int^{\tau}_0L\left(\eta^+,\frac{t}{t+h}\dot{\eta}^+,v^+\right)-L\left(\eta^-,\frac{t}{t-h}\dot{\eta}^-,v^-\right)ds,\\
	I_2&=\frac{h}t\int^{\tau}_0L\left(\eta^+,\frac{t}{t+h}\dot{\eta}^+,v^+\right)+L\left(\eta^-,\frac{t}{t-h}\dot{\eta}^-,v^-\right)ds.
	\end{align*}
	
	By \eqref{eq:K1} we get
	\begin{align*}
	|\eta^\pm(\tau)|\leqslant C_1,\quad\left|\frac{t}{t\pm h}\dot{\eta}^\pm(\tau)\right|\leqslant C_2,\quad |v^\pm(\tau)|=\big|u^{\pm}(\frac{t\pm h}t\cdot \tau)\big|\leqslant C_3.
	\end{align*}
Note that
\begin{align*}
	L\left(\eta^\pm,\frac{t}{t{\pm} h}\dot{\eta}^\pm,0\right)-K|v^{\pm}|\leqslant &L\left(\eta^\pm,\frac{t}{t\pm h}\dot{\eta}^\pm,v^\pm\right)\\
	\leqslant &L\left(\eta^\pm,\frac{t}{t{\pm} h}\dot{\eta}^\pm,0\right)+K|v^{\pm}|.
\end{align*}
Thus, we have
\[
\left|L\left(\eta^\pm(s),\frac{t}{t\pm h}\dot{\eta}^\pm(s),v^\pm(s)\right)\right|\leqslant C_4+K\cdot C_3,\quad \forall s\in[0,t],
\]
which implies
\[
I_2\leqslant C_5\cdot|h|.
\]

Since
\begin{align*}
	|\eta^+(\tau)-\eta^-(\tau)|=\left|\frac {2\tau}t(w-z)+2z\right|\leqslant4(|w|+|z|),
\end{align*}
and
\begin{equation}\label{eq:eta+-}
	\begin{split}
		&\,\left|\frac t{t+h}\dot{\eta}^+(\tau)-\frac t{t-h}\dot{\eta}^-(\tau)\right|\\
		=&\,\left|\left(\frac t{t+h}-\frac t{t-h}\right)\dot{\xi}(\tau)+\left(\frac 1{t+h}+\frac 1{t-h}\right)(w-z)\right|\\
		\leqslant&\,\frac {C_6}t(|h|+|w|+|z|),
	\end{split}
\end{equation}
we obtain that
\begin{equation}\label{eq:v+-}
	\begin{split}
			I_1=&\,\int^{\tau}_0L\left(\eta^+,\frac{t}{t+h}\dot{\eta}^+,v^+\right)-L\left(\eta^-,\frac{t}{t-h}\dot{\eta}^-,v^-\right)ds\\
		\leqslant& \,tC_7\cdot\sup_{\tau\in[0,t]}|\eta^+(\tau)-\eta^-(\tau)|+K\int^{\tau}_0|v^+-v^-|\ ds\\
                                  &\,+ tC_8\cdot\sup_{\tau\in[0,t]}\left|\frac t{t+h}\dot{\eta}^+(\tau)-\frac t{t-h}\dot{\eta}^-(\tau)\right|\\
		\leqslant&\,tC_9(|z|+|w|)+K\int^{\tau}_0|v^+-v^-|\ ds+C_{10}(|h|+|w|+|z|)\\
		\leqslant&\,C_{11}(|h|+|w|+|z|)+K\int^{\tau}_0|v^+-v^-|\ ds.
		\end{split}
\end{equation}

It follows that
\begin{align*}
	|v^+(\tau)-v^-(\tau)|\leqslant C_{12}(|k|+|h|+|z|+|w|)+K\int^{\tau}_0|v^+-v^-|\ ds.
\end{align*}
Invoking Gronwall inequality, we obtained
\begin{align*}
	|v^+(\tau)-v^-(\tau)|\leqslant C_{12}(|k|+|h|+|z|+|w|)\cdot\exp(Kt).
\end{align*}
Moreover, by \eqref{eq:v+-}, we have
\begin{equation}\label{eq:I_1}
	I_1\leqslant C_{11}\cdot(|h|+|w|+|z|)+tKC_{12}(|k|+|h|+|z|+|w|)\cdot\exp(Kt).
\end{equation}
Thus, we complete the proof.
\end{proof}

\begin{proof}[Proof of Lemma \ref{rescale_u2}]
	By the definitions of $\eta^{\pm}$ and $v^{\pm}$ together with \eqref{eq:rescale1} and \eqref{eq:rescale2}, we obtain
	\begin{align*}
		&\,v^+(\tau)+v^-(\tau)-2u_{\xi}(\tau)\\
		=&\,\int^{\tau}_0\left\{\frac{t+h}tL\left(\eta^+,\frac t{t+h}\dot{\eta}^+,v^+\right)+\frac{t-h}tL\left(\eta^-,\frac t{t-h}\dot{\eta}^-,v^-\right)-2L(\xi,\dot{\xi},u_{\xi})\right\} ds\\
		=&\, I_3+I_4,
		\end{align*}
	where 
\begin{align*}
	I_3=&\,\int^{\tau}_0\left\{L\left(\eta^+,\frac t{t+h}\dot{\eta}^+,v^+\right)+L\left(\eta^-,\frac t{t-h}\dot{\eta}^-,v^-\right)-2L(\xi,\dot{\xi},u_{\xi})\right\} ds,\\
	I_4=&\,\frac ht\int^{\tau}_0\left\{L\left(\eta^+,\frac t{t+h}\dot{\eta}^+,v^+\right)-L\left(\eta^-,\frac t{t-h}\dot{\eta}^-,v^-\right)\right\}ds.
\end{align*}
Due to \eqref{eq:I_1}, we have that
\begin{equation}\label{eq:I_4}
	\begin{split}
		I_4\leqslant&\,\frac {|h|}t\left\{C_1\cdot(|h|+|w|+|z|)+tKC_2(|k|+|h|+|z|+|w|)\cdot\exp(Kt)\right\}\\
		\leqslant&\,\frac {C_3}t(|h|^2+|z|^2+|w|^2)+C_4|k|^2.
	\end{split}
\end{equation}
	
It is clear that if a function $u:\R^n\to\R$ is Lipschitz with constant $K_1$ and semiconcave with constant $K_2$, then we have that
	\begin{align}\label{semiconcave}
		u(x)+u(y)-2u(z)\leqslant\frac {K_2}2|x-y|^2+K_1|x+y-2z|.
	\end{align}
	Since $L$ is of class $C^2$, then we have that
	\begin{equation}\label{eq:semiconcavity1}
		\begin{split}
			I_3\leqslant&\,\int^{\tau}_0\left|L\left(\eta^+,\frac t{t+h}\dot{\eta}^+,v^+\right)+L\left(\eta^-,\frac t{t-h}\dot{\eta}^-,v^-\right)-2L(\xi,\dot{\xi},u_{\xi})\right| ds\\
	\leqslant&\,C_5\cdot\int^t_0\left(|\eta^+-\eta^-|^2+|v^--v^+|^2+\left|\frac t{t+h}\dot{\eta}^+-\frac t{t-h}\dot{\eta}^-\right|^2\right)\ ds\\
		&\,+C_6\cdot\int^t_0(|\eta^++\eta^--2\xi|+|v^-+v^+-2u_{\xi}|)\ ds\\
		&\,+C_7\cdot\int^t_0\left|\frac t{t+h}\dot{\eta}^++\frac t{t-h}\dot{\eta}^--2\dot{\xi}\right|\ ds.
		\end{split}
	\end{equation}
	Note that
	\begin{align*}
		|\eta^+(\tau)+\eta^-(\tau)-2\xi(\tau)|=&\,0,\\
		\left|\frac t{t+h}\dot{\eta}^+(\tau)+\frac t{t-h}\dot{\eta}^-(\tau)-2\dot{\xi}(\tau)\right|=&\,\left|\frac{2h^2}{t^2-h^2}\dot{\xi}-\frac{2h}{t^2-h^2}(w-z)\right|\\
		\leqslant&\frac{C_8}{t^2}(|h|^2+|z|^2+|w|^2).
	\end{align*}
	By (\ref{eq:eta+-}), (\ref{eq:semiconcavity1}) and Lemma \ref{rescale_u}, it follows that
	\begin{align*}
		I_3\leqslant&\, C_9\cdot\int^t_0[4(|w|+|z|)^2+C_{10}^2(|k|+|z|+|w|+|h|)^2+\frac{C_{11}^2}{t^2}(|h|+|w|+|z|)^2]\ ds\\
		&\,+C_{12}\cdot\int^t_0 |v^-+v^+-2u_{\xi}|\ ds+C_{13}\cdot\int^t_0\frac{C^2_{14}}{t^2}\cdot(|h|^2+|z|^2+|w|^2)\ ds\\
		\leqslant&\,\frac {C_{15}}t(|h|^2+|z|^2+|w|^2)+C_{16}|k|^2+C_{17}\cdot\int_0^t\left|v^++v^--2u_\xi\right|\ ds.
	\end{align*}

Thus, we conclude that
\begin{align*}
	&\,|v^+(\tau)+v^-(\tau)-2u_{\xi}(\tau)|\leqslant I_3+I_4\\
	\leqslant&\,\frac {C_{18}}t(|h|^2+|z|^2+|w|^2)+C_{19}|k|^2+C_{17}\cdot\int_0^t\left|v^++v^--2u_\xi\right|\ ds.
\end{align*}
Invoking Gronwall inequality, we obtain
\begin{align*}
	|v^+(\tau)+v^-(\tau)-2u_{\xi}(\tau)|\leqslant \frac {C_{20}}t(|h|^2+|z|^2+|w|^2)+C_{21}|k|^2.
\end{align*}
The last estimate is a direct consequence and this completes the proof.
\end{proof}

\begin{proof}[Proof of Theorem \ref{semiconcavity}]
	The proof is based on the estimate of
\begin{align*}
	I\leqslant&\,\int^{t+h}_0L(\xi^+,\dot{\xi}^+,u^+)\ ds+\int^{t-h}_0L(\xi^-,\dot{\xi}^-,u^-)\ ds-2\int^{t}_0L(\xi,\dot{\xi},u_{\xi})\ ds\\
	=&\,\frac{t+h}t\int^t_0L\left(\eta^+,\frac{t}{t+h}\dot{\eta}^+,v^+\right)d\tau+\frac{t-h}t\int^t_0L\left(\eta^-,\frac{t}{t-h}\dot{\eta}^-,v^-\right)d\tau\\
	&\,-2\int^{t}_0L(\xi,\dot{\xi},u_{\xi})\ d\tau.
\end{align*}
Set
\begin{align*}
	I_1=&\,\frac ht\int^t_0\left\{L\left(\eta^+,\frac{t}{t+h}\dot{\eta}^+,v^+\right)-L\left(\eta^-,\frac{t}{t-h}\dot{\eta}^-,v^-\right)\right\}d\tau,\\
	I_2=&\,\int^t_0\left\{L\left(\eta^+,\frac{t}{t+h}\dot{\eta}^+,v^+\right)+L\left(\eta^-,\frac{t}{t-h}\dot{\eta}^-,v^-\right)-2L(\xi,\dot{\xi},u_{\xi})\right\}d\tau.
\end{align*}
Then we have that $I\leqslant I_1+I_2$. By  \eqref{eq:I_4}, we obtain
\begin{align*}
	I_1\leqslant C_1\cdot\left(\frac 1t+1\right)(|h|^2+|z|^2+|w|^2)+C_2|k|^2.
\end{align*}
The estimate of $I_2$ is obtained by Lemma \ref{rescale_u2}. The combination of the estimate of $I_1$ and $I_2$ leads to our conclusion.
\end{proof}

Due to Fenchel-Young inequality, for any $\varepsilon>0$, there exists a constant $C_{\varepsilon}>0$ such that
	\begin{equation}\label{eq:FY}
		ab\leqslant \varepsilon a^2+C_{\varepsilon}b^2,\quad\forall a,b\geqslant0.
	\end{equation}

\begin{proof}[Proof of Lemma \ref{regularity_main}]
 Suppose that $\xi_i$ is a minimal curve for $h_L(t,x,y_i,u_i)$ ($i=1,2$). That is
\begin{align*}
	h_L(t,x,y_i,u_i)=u_i+\int^t_0L(\xi_i,\dot{\xi}_i,u_{\xi_i})\ ds,\quad i=1,2,
\end{align*}
where $u_{\xi_i}$ is determined by \eqref{eq:Caratheodory_ODE} with initial condition $u_{\xi_i}(0)=u_i$ ($i=1,2$). We denote $p_i=L_v(\xi_i,\dot{\xi}_i,u_{\xi_i})$ the dual arc of $\xi_i$ ($i=1,2$). 

Moreover, $(\xi_i,p_i,u_{\xi_i})$ satisfies Lie equations
\begin{align*}
	\begin{cases}
\dot{\xi_i}=H_p(\xi_i,p_i,u_{\xi_i})\\
\dot{p}_i=-H_x(\xi_i,p_i,u_{\xi_i})-H_u(\xi_i,p_i,u_{\xi_i})p_i\\
\dot{u}_{\xi_i}=p_i\cdot\dot{\xi}_i-H(\xi_i,p_i,u_{\xi_i})
\end{cases}
\quad \text{on}\ [0,t]
\end{align*}
with
$$
\xi_i(0)=x,\quad\xi_i(t)=y_i,\quad u_{\xi_i}(0)=u_i.
$$
Therefore
\begin{align*}
	\frac 12\frac d{ds}|\xi_2-\xi_1|^2=\langle H_p(\xi_2,p_2,u_{\xi_2})-H_p(\xi_1,p_1,u_{\xi_1}),\xi_2-\xi_1\rangle.
\end{align*}
Integrating over $[s,t]\subset[0,1]$, we conclude that
\begin{equation}\label{eq:regu1}
	\begin{split}
		&\,|\xi_2(t)-\xi_1(t)|^2-|\xi_2(s)-\xi_1(s)|^2\\
		\geqslant&\,-C_1\int^t_s(|\xi_2-\xi_1|^2+|u_{\xi_2}-u_{\xi_1}||\xi_2-\xi_2|+|p_2-p_1||\xi_2-\xi_1|)\ d\tau\\
		\geqslant&\,-C_2\int^t_s|\xi_2-\xi_1|^2d\tau-C_2\int^t_s|u_{\xi_2}-u_{\xi_1}|^2d\tau-C_2\int^t_s|p_2-p_1|^2d\tau.
	\end{split}
\end{equation}
By the Carath\'eodory equation \eqref{eq:Caratheodory_ODE}, it is not difficult to see
\begin{align*}
	|u_{\xi_2}(s)-u_{\xi_1}(s)|\leqslant&\,|u_2-u_1|+C_3\left(\int^s_0|\xi_2-\xi_1|d\tau+\int^s_0|p_2-p_1|d\tau\right)\\
	&+C_4\int^s_0|u_{\xi_2}-u_{\xi_1}|d\tau.
\end{align*}
We denote 
\begin{align*}
	\|u_{\xi_2}-u_{\xi_1}\|=\sup_{s\in[0,t]}|u_{\xi_2}(s)-u_{\xi_1}(s)|,\quad \|\xi_2-\xi_1\|=\sup_{s\in[0,t]}|\xi_2(s)-\xi_1(s)|.
\end{align*}
Invoking Gronwall inequality, we have
\begin{align*}
	\|u_{\xi_2}-u_{\xi_1}\|\leqslant \left(|u_2-u_1|+C_3t\|\xi_2-\xi_1\|+C_3\int^t_0|p_2-p_1|d\tau\right)\exp(C_4t).
\end{align*}
And Jensen inequality implies that
\begin{equation}\label{eq:u^2}
	\|u_{\xi_2}-u_{\xi_1}\|^2\leqslant C_5\left(|u_2-u_1|^2+t\|\xi_2-\xi_1\|^2+t\int^t_0|p_2-p_1|^2d\tau\right),
\end{equation}
since $0<t\leqslant 1$. Now \eqref{eq:regu1} becomes
\begin{equation}\label{eq:basic1}
	\begin{split}
		&\,\|\xi_2-\xi_1\|^2\\
		\leqslant&\,|y_2-y_1|^2+C_2t\|\xi_2-\xi_1\|^2+C_2\int^t_0|p_2-p_1|^2d\tau+C_2t\|u_{\xi_2}-u_{\xi_1}\|^2\\
		\leqslant&\,|y_2-y_1|^2+C_6|u_2-u_1|^2+C_6t\|\xi_2-\xi_1\|^2+C_6\int^t_0|p_2-p_1|^2d\tau.\\
	\end{split}
\end{equation}
Now, 
\begin{align}\label{eq:p}
\frac{d}{dt}\langle p_2-p_1,\xi_2-\xi_1\rangle=I_1-I_2-I_3,
\end{align}
where
\begin{align*}
I_1&=\langle p_2-p_1,H_p(\xi_2,p_2,u_{\xi_2})-H_p(\xi_1,p_1,u_{\xi_1})\rangle,\\
I_2&=\langle H_x(\xi_2,p_2,u_{\xi_2})-H_x(\xi_1,p_1,u_{\xi_1}),\xi_2-\xi_1\rangle,\\
I_3&=\langle H_u(\xi_2,p_2,u_{\xi_2})p_2-H_u(\xi_1,p_1,u_{\xi_1})p_1,\xi_2-\xi_1\rangle.
\end{align*}
Then, we have
\begin{align*}
I_1&=\langle p_2-p_1,\widehat{H}_{px}(\xi_2-\xi_1)\rangle+\langle p_2-p_1,\widehat{H}_{pu}(u_{\xi_2}-u_{\xi_1})\rangle+\langle p_2-p_1,\widehat{H}_{pp}(p_2-p_1)\rangle\\
&\geqslant \nu |p_2-p_1|^2-C_7(|u_{\xi_2}-u_{\xi_1}|\cdot |\xi_2-\xi_1|+|p_2-p_1|\cdot |\xi_2-\xi_1|),
\end{align*}
where $\nu$ is constant such that $|\widehat{H}_{pp}|\geqslant\nu$,
\begin{align*}
	\widehat{H}_{px}=\int^1_0H_{px}(\lambda\xi_2+(1-\lambda)\xi_1,\lambda\dot{\xi}_2+(1-\lambda)\dot{\xi}_1,\lambda u_{\xi_2}+(1-\lambda)u_{\xi_1})d\lambda,
\end{align*}
and $\widehat{H}_{pp}$, $\widehat{H}_{xx}$, $\widehat{H}_{xu}$, $\widehat{H}_{pu}$ and $\widehat{H}_{uu}$ are defined in a similar way. For $I_2,I_3$, we have
\begin{align*}
I_2=&\langle \widehat{H}_{xx}(\xi_2-\xi_1),\xi_2-\xi_1\rangle+\langle \widehat{H}_{xu}(u_{\xi_2}-u_{\xi_1}),\xi_2-\xi_1\rangle\\
&+\langle \widehat{H}_{xp}(p_2-p_1),\xi_2-\xi_1\rangle\\
\leqslant &C_8(|\xi_2-\xi_1|^2+|u_{\xi_2}-u_{\xi_1}|\cdot |\xi_2-\xi_1|+|p_2-p_1|\cdot |\xi_2-\xi_1|),
\end{align*}
and
\begin{align*}
I_3=&\langle (H_u(\xi_2,p_2,u_{\xi_2})-H_u(\xi_1,p_1,u_{\xi_1}))p_2,\xi_2-\xi_1\rangle\\
&+\langle H_u(\xi_1,p_1,u_{\xi_1})(p_2-p_1),\xi_2-\xi_1\rangle\\
=&\langle \widehat{H}_{ux}(\xi_2-\xi_1)p_2,\xi_2-\xi_1\rangle+\langle \widehat{H}_{uu}(u_{\xi_2}-u_{\xi_1})p_2,\xi_2-\xi_1\rangle\\
&+\langle\widehat{H}_{up}(p_2-p_1)p_2,\xi_2-\xi_1\rangle+\langle H_u(\xi_1,p_1,u_{\xi_1})(p_2-p_1),\xi_2-\xi_1\rangle\\
\leqslant &C_9(|\xi_2-\xi_1|^2+|u_{\xi_2}-u_{\xi_1}|\cdot |\xi_2-\xi_1|+|p_2-p_1|\cdot |\xi_2-\xi_1|).
\end{align*}
where $C_9$ is the upper bound for $D^2H$ and $DH$.
Therefore, integrating (\ref{eq:p}) over $[0,t]$, we obtain
\begin{align*}
&\langle p_2(t)-p_1(t),\xi_2(t)-\xi_1(t)\rangle\geqslant\nu\int_0^t|p_2-p_1|^2ds\\
&-C_{10}\int_0^t(|\xi_2-\xi_1|^2+|u_{\xi_2}-u_{\xi_1}|\cdot |\xi_2-\xi_1|+|p_2-p_1|\cdot |\xi_2-\xi_1|)ds\\
&\geqslant(\nu-C_{10}\epsilon)\int_0^t|p_2-p_1|^2ds-C_1(\epsilon)\int_0^t|\xi_2-\xi_1|^2ds-C_{11}\int_0^t|u_{\xi_2}-u_{\xi_1}|^2ds.
\end{align*}
Thus, by (\ref{eq:u^2}), we conclude
\begin{align*}
&\langle p_2(t)-p_1(t),\xi_2(t)-\xi_1(t)\rangle\geqslant(\nu-C_{10}\epsilon-C_5C_{11}t)\int_0^t|p_2-p_1|^2ds\\
&-(C_1(\epsilon)+C_5C_{11}t)\int_0^t|\xi_2-\xi_1|^2ds-C_5C_{11}|u_2-u_1|^2.
\end{align*}
Set $\epsilon=\frac{\nu}{2C_{10}}$. If $t<\frac{\nu}{4C_5C_{11}}$, then
\begin{equation}\label{eq:p_2}
\int_0^t|p_2-p_1|^2ds\leqslant\frac{4}{\nu}\left(C_{12}t\Vert \xi_2-\xi_1\Vert^2+\frac{C_{12}}{t}|y_2-y_1|^2+C_{12}|u_2-u_1|^2\right).
\end{equation}
From (\ref{eq:basic1}), it follows that
\begin{align*}
	&\left(1-C_6t-\frac{4C_{12}C_6}{\nu}t\right)\|\xi_2-\xi_1\|^2\\
	\leqslant&\,\left(1+\frac{4C_{12}C_6}{\nu t}\right)|y_2-y_1|^2+\left(C_6+\frac{4C_{12}C_6}{\nu}\right)|u_2-u_1|^2.
\end{align*}
For $t<\frac{\nu}{2C_6(\nu+4C_{12})}$, we conclude that
\begin{align}\label{eq:xi_2}
\|\xi_1-\xi_2\|^2\leqslant& 2\left(1+\frac{4C_{12}C_6}{\nu t}\right)|y_2-y_1|^2+2\left(C_6+\frac{4C_{12}C_6}{\nu}\right)|u_2-u_1|^2\\
\leqslant& \frac{C_{13}}{t}|y_2-y_1|^2+C_{14}|u_2-u_1|^2.\nonumber
\end{align}
From (\ref{eq:p_2}) and (\ref{eq:xi_2}), one has
\begin{equation}\label{eq:p2}
	\int^t_0|p_2-p_1|^2d\tau\leqslant\frac {C_{15}}t(|y_2-y_1|^2)+C_{16}|u_2-u_1|^2.
\end{equation}
Finally, the combination of \eqref{eq:u^2}, \eqref{eq:xi_2} and \eqref{eq:p2} implies that
\begin{align}\label{eq:uu}
	\|u_{\xi_2}-u_{\xi_1}\|^2\leqslant C_{17}(|y_2-y_1|^2+|u_2-u_1|^2).
\end{align}
Finally, by \eqref{eq:xi_2}, \eqref{eq:p2} and \eqref{eq:uu}, we obtain
\begin{align*}
&\int_0^t|\dot{\xi_2}-\dot{\xi_1}|^2ds=\int_0^t|H_p(\xi_2,\dot{\xi}_2,u_{\xi_2})-H_p(\xi_1,\dot{\xi}_1,u_{\xi_1})|^2ds\\
=&\int_0^t|\widehat{H}_{px}(\xi_2-\xi_1)+\widehat{H}_{pu}(u_{\xi_2}-u_{\xi_1})+\widehat{H}_{pp}(p_2-p_1)|^2ds\\
\leqslant & C_{18}\left(t\|\xi_2-\xi_1\|^2+t\|u_{\xi_2}-u_{\xi_1}\|^2+\int^t_0|p_2-p_1|^2ds\right)\\
\leqslant&\frac {C_{19}}t|y_2-y_1|^2+C_{20}|u_2-u_1|^2.
\end{align*}
This completes our proof.
\end{proof}

\begin{proof}[Proof of Lemma \ref{lemma}]
For any $\tau\in[0,t]$,
\begin{align*}
|\eta_+(\tau)-\eta_-(\tau)|=&\left|\xi_+\left(\frac{t+h}{t}\tau\right)-\xi_-\left(\frac{t-h}{t}\tau\right)\right|\\
\leqslant &\left|\xi_+\left(\frac{t+h}{t}\tau\right)-\xi_+\left(\frac{t-h}{t}\tau\right)\right|+\left|\xi_+\left(\frac{t-h}{t}\tau\right)-\xi_-\left(\frac{t-h}{t}\tau\right)\right|\\
\leqslant &C_1|h|+\left|\xi_+\left(\frac{t-h}{t}\tau\right)-\xi_-\left(\frac{t-h}{t}\tau\right)\right|\\
\leqslant &C_1|h|+\max_{s\in [0,t-h]}|\xi_+(s)-\xi_-(s)|.
\end{align*}
Since
\begin{align*}
|\xi_+(t-h)-\xi_-(t-h)|\leqslant &|\xi_+(t-h)-\xi_+(t+h)|+|\xi_+(t+h)-\xi_-(t-h)|\\
\leqslant & 2(C_2|h|+|z|),
\end{align*}
by (\ref{hu:xi}), we obtain
\begin{align*}
	\max_{s\in [0,t-h]}|\xi_+(s)-\xi_-(s)|^2\leqslant \frac{C_3}t(|h|^2+|z|^2)+C_3|k|^2.
\end{align*}
Thus, (\ref{hu:eta}) follows immediately.

Similarly, we have
\begin{align*}
|\dot{\eta}_+-\dot{\eta}_-|=&\left|\frac{t+h}{t}\dot{\xi}_+\left(\frac{t+h}{t}\tau\right)-\frac{t-h}{t}\dot{\xi}_-\left(\frac{t-h}{t}\tau\right)\right|\\
\leqslant &\left|\frac{t+h}{t}\dot{\xi}_+\left(\frac{t+h}{t}\tau\right)-\frac{t+h}{t}\dot{\xi}_-\left(\frac{t-h}{t}\tau\right)\right|\\
&+\left|\frac{t+h}{t}\dot{\xi}_-\left(\frac{t-h}{t}\tau\right)-\frac{t-h}{t}\dot{\xi}_-\left(\frac{t-h}{t}\tau\right)\right|\\
\leqslant &\frac{t+h}{t}\left|\dot{\xi}_+\left(\frac{t+h}{t}\tau\right)-\dot{\xi}_-\left(\frac{t-h}{t}\tau\right)\right|+C_4\frac{|h|}{t}.
\end{align*}
On the other hand,
\begin{align*}
&\left|\dot{\xi}_+\left(\frac{t+h}{t}\tau\right)-\dot{\xi}_-\left(\frac{t-h}{t}\tau\right)\right|\\
\leqslant &\left|\dot{\xi}_+\left(\frac{t+h}{t}\tau\right)-\dot{\xi}_+\left(\frac{t-h}{t}\tau\right)\right|+\left|\dot{\xi}_+\left(\frac{t-h}{t}\tau\right)-\dot{\xi}_-\left(\frac{t-h}{t}\tau\right)\right|\\
\leqslant &C_5\frac{|h|}{t}+\left|\dot{\xi}_+\left(\frac{t-h}{t}\tau\right)-\dot{\xi}_-\left(\frac{t-h}{t}\tau\right)\right|.
\end{align*}
Thus, we conclude that
\begin{align*}
\int_0^t|\dot{\eta}_+-\dot{\eta}_-|^2d\tau\leqslant&C_6\frac{|h|^2}{t}+\frac{(t+h)^2}{t^2}\int_0^t\left|\dot{\xi}_+\left(\frac{t-h}{t}\tau\right)-\dot{\xi}_-\left(\frac{t-h}{t}\tau\right)\right|^2d\tau\\
=&C_6\frac{|h|^2}{t}+\frac{(t+h)^2}{t^2}\cdot\frac{t}{t-h}\int_0^{t-h}|\dot{\xi}_+-\dot{\xi}_-|^2d\tau\\
\leqslant & C_6\frac{|h|^2}{t}+\frac{(t+h)^2}{t^2}\cdot\frac{t}{t-h}\cdot\left(\frac{C_7}{t}(|h|^2+|z|^2)+C_7|k|^2\right).
\end{align*}
This leads to (\ref{hu:doteta}).

From (\ref{hu:eq}), it follows that
\begin{align*}
v_+(\tau)-v_-(\tau)=&\,v_+(0)-v_-(0)+\int^{\tau}_0L\left(\eta_+,\frac{t}{t+h}\dot{\eta}_+,v_+\right)-L\left(\eta_-,\frac{t}{t-h}\dot{\eta}_-,v_-\right)ds\\
		&\,+\frac{h}t\int^{\tau}_0L\left(\eta_+,\frac{t}{t+h}\dot{\eta}_+,v_+\right)+L\left(\eta_-,\frac{t}{t-h}\dot{\eta}_-,v_-\right)ds\\
		\leqslant&\,2|k|+I_1+C_8|h|,
	\end{align*}
where
\begin{equation}\label{hu:I_1}
I_1=\int^{\tau}_0L\left(\eta_+,v_+,\frac{t}{t+h}\dot{\eta}_+\right)-L\left(\eta_-,v_-,\frac{t}{t-h}\dot{\eta}_-\right)ds.
\end{equation}	
On the other hand,
\begin{align}
\left|\frac{t}{t+h}\dot{\eta}_+-\frac{t}{t-h}\dot{\eta}_-\right|\leqslant &\left|\frac{t}{t+h}\dot{\eta}_+-\frac{t}{t+h}\dot{\eta}_-\right|+\left|\frac{t}{t+h}\dot{\eta}_--\frac{t}{t-h}\dot{\eta}_-\right|\label{hu:tth}\\
\leqslant & 2|\dot{\eta}_+-\dot{\eta}_-|+C_9\frac{|h|}{t}.\nonumber
\end{align}
Thus,
\begin{equation}\label{hu:I}
I_1\leqslant C_{10}(t\|\eta_+-\eta_+\|+\int_0^\tau |v_+-v_-|ds+\int_0^\tau|\dot{\eta}_+-\dot{\eta}_-|ds+|h|),
\end{equation}
which yields that
$$|v_+-v_-|\leqslant 2|k|+C_8|h|+C_{10}(t\|\eta_+-\eta_+\|+\int_0^\tau |v_+-v_-|ds+\int_0^\tau|\dot{\eta}_+-\dot{\eta}_-|ds+|h|).$$
Invoking Gronwall inequality, we have
\begin{align*}
&\|v_+-v_-\|\\
\leqslant &\left(2|k|+(C_8+C_{10})|h|+C_{10}t\|\eta_+-\eta_+\|+C_{10}\int_0^t|\dot{\eta}_+-\dot{\eta}_-|ds\right)\exp(C_{10}t).
\end{align*}
By (\ref{hu:eta}) and (\ref{hu:doteta}), we have
\begin{align*}
\|v_+-v_-\|^2\leqslant &C_{11}(|h|^2+|k|^2+t\|\eta_+-\eta_+\|^2+t\int_0^t|\dot{\eta}_+-\dot{\eta}_-|^2ds)\\
\leqslant &C_{12}(|h|^2+|z|^2+|k|^2).
\end{align*}

Finally, we are ready to prove (\ref{hu:vv}). By the definitions of $\eta^{\pm}$ and $v^{\pm}$, we obtain
	\begin{align*}
		&\,v_+(\tau)+v_-(\tau)-2u_{\xi}(\tau)\\
		=&\,\int^{\tau}_0\left\{\frac{t+h}tL\left(\eta_+,\frac t{t+h}\dot{\eta}_+,v_+\right)+\frac{t-h}tL\left(\eta_-,\frac t{t-h}\dot{\eta}_-,v_-\right)-2L(\xi,\dot{\xi},u_{\xi})\right\} ds\\
		=&\, I_2+\frac{h}{t}I_1,
		\end{align*}
	where $I_1$ is defined by (\ref{hu:I_1}) and
\begin{align*}
	I_2=&\,\int^{\tau}_0\left\{L\left(\eta_+,\frac t{t+h}\dot{\eta}_+,v_+\right)+L\left(\eta_-,\frac t{t-h}\dot{\eta}_-,v_-\right)-2L(\xi,\dot{\xi},u_{\xi})\right\} ds.
	\end{align*}
Due to (\ref{hu:eta})-(\ref{hu:v}) and \eqref{hu:I}, we have that
	\begin{align*}
		\frac{h}{t}I_1\leqslant&\,\frac {C_{13}}t(|h|^2+|z|^2)+C_{13}|k|^2.
			\end{align*}
	Since $L$ is of $C^2$ class, then by (\ref{semiconcave}) we have that
	\begin{equation}\label{eq:semiconcavity1}
		\begin{split}
			I_2\leqslant&\,\int^{\tau}_0\left|L\left(\eta_+,\frac t{t+h}\dot{\eta}_+,v_+\right)+L\left(\eta_-,\frac t{t-h}\dot{\eta}_-v_-\right)-2L(\xi,\dot{\xi},u_{\xi})\right| ds\\	
			\leqslant&\,C_{14}\cdot\int^t_0\left(|\eta_+-\eta_-|^2+|v_--v_+|^2+\left|\frac t{t+h}\dot{\eta}_+-\frac t{t-h}\dot{\eta}_-\right|^2\right)\ ds\\	
		&\,+C_{14}\cdot\int^t_0(|\eta_++\eta_--2\xi|+|v_-+v_+-2u_{\xi}|)\ ds\\
		&\,+C_{14}\cdot\int^t_0\left|\frac t{t+h}\dot{\eta}_++\frac t{t-h}\dot{\eta}_--2\dot{\xi}\right|\ ds.
		\end{split}
	\end{equation}
	Since 
	$$\eta_++\eta_--2\xi=0,$$
	\begin{align*}
	&\left|\frac t{t+h}\dot{\eta}_++\frac t{t-h}\dot{\eta}_--2\dot{\xi}\right|=\left|\frac h{t+h}\dot{\eta}_+-\frac h{t-h}\dot{\eta}_-\right|\\
	\leqslant& \left|\frac h{t+h}(\dot{\eta}_+-\dot{\eta}_-)\right|+\left|\left(\frac h{t+h}-\frac h{t-h}\right)\dot{\eta}_-\right|\leqslant C_{15}\left(\frac{|h|}{t}\cdot|\dot{\eta}_+-\dot{\eta}_-|+\frac{|h|^2}{t^2}\right)\\
	\leqslant&\,C_{16}\left(|\dot{\eta}_+-\dot{\eta}_-|^2+\frac {|h|^2}{t^2}\right),
	\end{align*}
by \eqref{hu:eta}-\eqref{hu:v} and \eqref{hu:tth} we obtain
\begin{align*}
	|v^-+v^+-2u_{\xi}|\leqslant\left\{\frac {C_{17}}t(|h|^2+|z|^2)+C_{17}|k|^2)\right\}\exp(C_{14}t).
\end{align*}
Then, (\ref{hu:vv}) follows immediately. This completes the proof.
\end{proof}

\begin{proof}[Proof of Theorem \ref{hu:uniform covex}]
The proof is based on the estimate
\begin{align*}
	I\geqslant&\,\int^{t+h}_0L(\xi_+,\dot{\xi}_+,u_+)\ ds+\int^{t-h}_0L(\xi_-,\dot{\xi}_-,u_-)\ ds-2\int^{t}_0L(\xi,\dot{\xi},u_{\xi})\ ds\\
	=&\,\frac{t+h}t\int^t_0L\left(\eta_+,\frac{t}{t+h}\dot{\eta}^+,v_+\right)d\tau+\frac{t-h}t\int^t_0L\left(\eta_-,\frac{t}{t-h}\dot{\eta}^-,v_-\right)d\tau\\
	&\,-2\int^{t}_0L(\xi,\dot{\xi},u_{\xi})\ d\tau.
\end{align*}
Set
\begin{align*}
	I_1=&\,\frac ht\int^t_0\left\{L\left(\eta_+,\frac{t}{t+h}\dot{\eta}_+,v_+\right)-L\left(\eta_-,\frac{t}{t-h}\dot{\eta}_-,v_-\right)\right\}d\tau,\\
	I_2=&\,\int^t_0\left\{L\left(\eta_+,\dot{\eta}_+,v_+\right)+L\left(\eta_-,\dot{\eta}_-,v_-\right)-2L(\xi,\dot{\xi},u_{\xi})\right\}d\tau,\\
	I_3=&\,\int^t_0L\left(\eta_+,\frac{t}{t+h}\dot{\eta}_+,v_+\right)d\tau-\int_0^tL(\eta_+,\dot{\eta}_+,v_+)d\tau\\
	&+\,\int^t_0L\left(\eta_-,\frac{t}{t-h}\dot{\eta}_-,v_-\right)d\tau-\int_0^tL\left(\eta_-,\dot{\eta}_-,v_-\right)d\tau.
\end{align*}
Then $I\geqslant I_1+I_2+I_3$.

\medskip

\noindent\textbf{Estimate of $I_1$:} By (\ref{hu:eta})-(\ref{hu:v}) and (\ref{hu:I}), we have
\begin{align*}
I_1\geqslant& -C_1\frac{|h|}{t}\cdot\left(t\|\eta_+-\eta_+\|+\int_0^t |v_+-v_-|ds+\int_0^t|\dot{\eta}_+-\dot{\eta}_-|ds+|h|\right)\\
\geqslant & -\frac{C_2}{t}(|h|^2+|z|^2)+C_2|k|^2.
\end{align*}

\medskip
\noindent\textbf{Estimate of $I_2$:} 
\begin{align*}
I_2=&\int^t_0\left\{L\left(\eta_+,\dot{\eta}_+,v_+\right)+L\left(\eta_-,\dot{\eta}_-,v_-\right)-2L\left(\xi,\dot{\xi},\frac{v_++v_-}{2}\right)\right\}d\tau\\
&+2\int^t_0L\left(\xi,\dot{\xi},\frac{v_++v_-}{2}\right)-L\left(\xi,\dot{\xi},u_\xi\right)d\tau:=I_4+I_5.
\end{align*}
For $I_4$, we have
\begin{align*}
I_4=&\,\int^t_0\left\{L\left(\xi,\dot{\eta}_+,\frac{v_++v_-}{2}\right)+L\left(\xi,\dot{\eta}_-,\frac{v_++v_-}{2}\right)-2L\left(\xi,\dot{\xi},\frac{v_++v_-}{2}\right)\right\}d\tau\\
&+\int^t_0L\left(\eta_+,\dot{\eta}_+,v_+\right)d\tau-\int^t_0L\left(\xi,\dot{\eta}_+,\frac{v_++v_-}{2}\right)d\tau\\
&+\int^t_0L\left(\eta_-,\dot{\eta}_-,v_-\right)d\tau-\int^t_0L\left(\xi,\dot{\eta}_-,\frac{v_++v_-}{2}\right)d\tau\\
\geqslant &\nu\int_0^t\left|\frac{\dot{\eta}_+-\dot{\eta}_-}{2}\right|^2d\tau+ \int_0^t\int_0^1\left\langle \tilde{L}_x,\frac{\eta_+-\eta_-}{2}\right\rangle+\left\langle\tilde{L}_u,\frac{v_+-v_-}{2}\right\rangle d\lambda d\tau,
\end{align*}
where
\begin{align*}
\tilde{L}_x=&L_x\left(\lambda\eta_++(1-\lambda)\xi,\dot{\eta}_+,\lambda v_++(1-\lambda)\frac{v_++v_-}{2}\right)\\
&-L_x\left(\lambda\eta_-+(1-\lambda)\xi,\dot{\eta}_-,\lambda v_-+(1-\lambda)\frac{v_++v_-}{2}\right),\\
\tilde{L}_u=&L_u\left(\lambda\eta_++(1-\lambda)\xi,\dot{\eta}_+,\lambda v_++(1-\lambda)\frac{v_++v_-}{2}\right)\\
&-L_u\left(\lambda\eta_-+(1-\lambda)\xi,\dot{\eta}_-,\lambda v_-+(1-\lambda)\frac{v_++v_-}{2}\right).
\end{align*}
It follows that
\begin{align*}
I_2\geqslant &\nu\int_0^t\left|\frac{\dot{\eta}_+-\dot{\eta}_-}{2}\right|^2d\tau-C_4\int_0^t(|v_++v_--2u_\xi|+\left|\eta_+-\eta_-\right|^2\\
&+|v_+-v_-|^2+|\dot{\eta}_+-\dot{\eta}_-|\cdot \left|\eta_+-\eta_-\right|+|\dot{\eta}_+-\dot{\eta}_-|\cdot |v_+-v_-|)d\tau.
\end{align*}

\medskip
\noindent\textbf{Estimate of $I_3$:} 
\begin{align*}
I_3\geqslant &\int_0^t \left\{\langle L_v(\eta_+,\dot{\eta}_+,v_+) ,-\frac{h}{t+h}\dot{\eta}_+\rangle +\frac{\nu h^2}{|t+h|^2}|\dot{\eta}_+|^2\right\}d\tau\\
&+\int_0^t\left\{\langle L_v(\eta_-,\dot{\eta}_-,v_-),\frac{h}{t-h}\dot{\eta}_-\rangle +\frac{\nu h^2}{|t-h|^2}|\dot{\eta}_-|^2\right\}d\tau\\
=&I_6+I_7,
\end{align*}
where
$$I_6=\int_0^t\langle L_v(\eta_+,\dot{\eta}_+,v_+) ,-\frac{h}{t+h}\dot{\eta}_+\rangle+\langle L_v(\eta_-,\dot{\eta}_-,v_-) ,\frac{h}{t-h}\dot{\eta}_-\rangle d\tau,$$
and 
$$I_7=\int_0^t\frac{\nu h^2}{|t+h|^2}|\dot{\eta}_+|^2+\frac{\nu h^2}{|t-h|^2}|\dot{\eta}_-|^2d\tau.$$
For $I_6$, we have
\begin{align*}
I_6=&\frac{h}{t-h}\int_0^t\langle L_v(\eta_-,\dot{\eta}_-,v_-) ,\dot{\eta}_--\dot{\eta}_+\rangle d\tau\\
&+\left(\frac{h}{t-h}-\frac{h}{t+h}\right)\int_0^t\langle L_v(\eta_-,\dot{\eta}_-,v_-),\dot{\eta}_+\rangle d\tau\\
&+\frac{h}{t+h}\int_0^t\langle L_v(\eta_-,\dot{\eta}_-,v_-)-L_v(\eta_+,\dot{\eta}_+,v_+),\dot{\eta}\rangle d\tau.
\end{align*}
Then, by (\ref{hu:eta})-(\ref{hu:v}) we have
\begin{align*}
I_6\geqslant & -C_5 \frac{2|h|}{t}\int_0^t |\dot{\eta}_+-\dot{\eta}_-|d\tau-C_5\frac{2|h|^2t}{t^2-h^2}\\
&-C_5\frac{|h|}{t+h}\left\{\int_0^t|\dot{\eta}_+-\dot{\eta}_-|d\tau+\int_0^t|v_+-v_-|d\tau+\int_0^t|\eta_+-\eta_-|d\tau \right\}\\
\geqslant &-\frac{C_6}{t}(|k|^2+|h|^2)-C_6|z|^2.
\end{align*}
For $I_7$, we obtain
\begin{align*}
I_7=&\int_0^{t+h}\frac{\nu h^2}{|t+h|^2}|\dot{\xi}_+|^2\cdot \frac{t}{t+h}ds+\int_0^{t-h}\frac{\nu h^2}{|t-h|^2}|\dot{\xi}_-|^2\cdot \frac{t}{t-h}ds\\
\geqslant&\frac{\nu h^2 t}{|t+h|^2}\inf_{s\in [0,t+h]}|\dot{\xi}_+|^2+\frac{\nu h^2 t}{|t-h|^2}\inf_{s\in [0,t-h]}|\dot{\xi}_-|^2\\
=&\frac{4\nu h^2 }{9t}(\inf_{s\in [0,t+h]}|\dot{\xi}_+|^2+\inf_{s\in [0,t-h]}|\dot{\xi}_-|^2).
\end{align*}
Now, we are ready to prove (\ref{convex}). Combining the estimates of $I_1$, $I_2$ and $I_3$, we obtain, thanks to Lemma \ref{lemma}
\begin{equation}\label{hu:c}
	\begin{split}
		&\,h_L(t+h,x,y+z,u+k)+h_L(t-h,x,y-z,u-k)-2h_L(t,x,y,u)\\
	\geqslant &-\frac{C_2+C_6}{t}(|h|^2+|z|^2)-(C_2+C_6)|k|^2-C_4\int_0^t\Big(|v_++v_--2u_\xi|+\left|\eta_+-\eta_-\right|^2\\
	&+|v_+-v_-|^2+|\dot{\eta}_+-\dot{\eta}_-|\cdot \left|\eta_+-\eta_-\right|+|\dot{\eta}_+-\dot{\eta}_-|\cdot |v_+-v_-|\Big)d\tau\\
	\geqslant&\,-\frac{C_{7}}{t}(|h|^2+|z|^2)-C_7|k|^2.
	\end{split}
\end{equation}
This completes the semiconvexity estimate \eqref{convex}.

Finally, in order to prove (\ref{uniform convex}), we take $h=0, k=0$. Then we conclude $I_1,\,I_3\geqslant 0$. Therefore, for any $\epsilon>0$, the estimate of $I_2$ yields that
\begin{align*}
h_L(&\,t,x,y+z,u)+h_L(t,x,y-z,u)-2h_L(t,x,y,u)\\
\geqslant &\frac{\nu}{4}\int_0^t\left|\dot{\eta}_+-\dot{\eta}_-\right|^2d\tau-C_4\int_0^t(|v_++v_--2u_\xi|+\left|\eta_+-\eta_-\right|^2\\
&+|v_+-v_-|^2+|\dot{\eta}_+-\dot{\eta}_-|\cdot \left|\eta_+-\eta_-\right|+|\dot{\eta}_+-\dot{\eta}_-|\cdot |v_+-v_-|)d\tau\\
\geqslant & \left(\frac{\nu}{4}-\epsilon\right)\int_0^t\left|\dot{\eta}_+-\dot{\eta}_-\right|^2d\tau-C_4\int_0^t(|v_++v_--2u_\xi|d\tau\\
&-\left(C_4+\frac{C_4^2}{2\epsilon}\right)\int_0^t(\left|\eta_+-\eta_-\right|^2+|v_+-v_-|^2)d\tau.
\end{align*}
Now, since $\dot{\eta}_+-\dot{\eta}_-$ is an arc connecting $0$ to $z$, comparison with $s\rightarrow \frac{s}{t}z$ yields
$$\int_0^t\left|\dot{\eta}_+-\dot{\eta}_-\right|^2d\tau\geqslant \frac{|z|^2}{t}.$$
Hence, taking $\epsilon=\nu/8$, and appealing to Lemma \ref{lemma}, it follows that
\begin{align*}
&h_L(\,t,x,y+z,u)+h_L(t,x,y-z,u)-2h_L(t,x,y,u)\\
\geqslant &\left\{\frac{\nu}{8t}-\left(C_4+\frac{C_4^2}{2\epsilon}\right)C\right\}|z|^2,
\end{align*}
where $C>0$ is determined by Lemma \ref{lemma}. Now choosing $t_{\lambda,\delta}$ such that
$$\frac{\nu}{8t_{\lambda,\delta}}-\left(C_4+\frac{C_4^2}{2\epsilon}\right)C>0,$$
one completes the proof.
\end{proof}

\begin{proof}[Proof of Corollary \ref{cor:convexity_t}]
By the estimate of $I_1,I_2,I_3$ in Theorem \ref{hu:uniform covex} and take $z=0,\,k=0$, we have
\begin{align*}
&h_L(t+h,x,y,u)+h_L(t-h,x,y,u)-2h_L(t,x,y,u)\\
\geqslant &\frac{4\nu |h|^2 }{9t}(\inf_{s\in [0,t+h]}|\dot{\xi}_+|^2+\inf_{s\in [0,t-h]}|\dot{\xi}_-|^2)-\frac{C_7}{t}|h|^2\\
\geqslant &\frac{C}{t}|h|^2,
\end{align*}
provided that
$$\inf_{s\in [0,t+h]}|\dot{\xi}_+|^2+\inf_{s\in [0,t-h]}|\dot{\xi}_-|^2> \frac{9 C_7}{4\nu},$$
where $C_7$ is defined by (\ref{hu:c}).
\end{proof}

\bibliographystyle{plain}
\bibliography{mybib}
\end{document}